\newtheorem{theorem}{Theorem}
\newtheorem{lemma}[theorem]{Lemma}
\newtheorem{proposition}[theorem]{Proposition}
\theoremstyle{definition}
\newtheorem{definition}[theorem]{Definition}
\theoremstyle{remark}
\newtheorem{remark}[theorem]{Remark}
\begin{document}
\title{On solutions to Walcher's extended holomorphic anomaly equation}
\author{Yukiko Konishi and Satoshi Minabe}
\address{Graduate School of Mathematical Sciences, The University of Tokyo,
 3-8-1 Komaba, Meguro, Tokyo 153-8914 Japan}
\email{konishi@ms.u-tokyo.ac.jp}
\address{Department of Mathematics, Hokkaido University,
Kita 10, Nishi 8, Kita-Ku, Sapporo 060-0810, Japan}
\email{minabe@math.sci.hokudai.ac.jp}

\subjclass[2000]{Primary 14J32;  Secondary 14N35, 14J81}

\maketitle
\begin{abstract}
We give a generalization of Yamaguchi--Yau's result
to Walcher's extended holomorphic anomaly equation.
\end{abstract}

\newcommand{\F}{\mathcal{F}}
\newcommand{\C}{{C_{zzz}}}
\newcommand{\G}{{G_{z\bar{z}}}}
\newcommand{\Gam}{\Gamma_{zz}^z}
\newcommand{\Ct}{{C_{\bar{z}}^{zz}}}
\newcommand{\del}{\partial_z}
\newcommand{\delbar}{\partial_{\bar{z}}}
\newcommand{\mcpl}{\mathcal{M}_{cpl}}
\section{Introduction} 

Let $X$ be a nonsingular quintic hypersurface in $\mathbb{CP}^4$. 
The case of the $X$ and its mirror is the most well-studied example of
the mirror symmetry.
After the construction of the mirror family of 
Calabi--Yau threefolds \cite{GP},
the genus zero Gromov--Witten (GW) potential of $X$
were computed via 
the Yukawa coupling of the mirror family \cite{CDGP}.
The predicted mirror formula was proved first by Givental \cite{Givental}. 

For higher genera, Bershadsky--Cecotti--Ooguri--Vafa (BCOV) \cite{BCOV}
has predicted that the GW potential at genus $g$ is obtained as a certain limit 
of the B-model closed topological string amplitude 
$\mathcal{F}^{(g)}$ of genus $g$
\footnote{
For genus $g=0$,
the third covariant derivative of $\mathcal{F}^{(0)}$
is the Yukawa coupling, 
and for $g=1$, it is recently proved that 
$\mathcal{F}^{(1)}$ is the Quillen's norm function
\cite{FZY}.
For genus $g\geq 2$, the mathematical definition of 
$\mathcal{F}^{(g)}$ is yet to be known.}.  
They have also proposed a partial differential equation (PDE) for $\mathcal{F}^{(g)}$, 
called the BCOV holomorphic anomaly equation, which determines 
$\mathcal{F}^{(g)}$ up to a holomorphic function.
The prediction of BCOV for the genus one GW potential  was proved by 
Zinger \cite{Zinger}. 

Recently the open string analogue of 
the mirror symmetry 
has been
developed by Walcher \cite{Wa1} 
for the pair $(X,L)$ of
the quintic $3$-fold $X$ defined over $\Bbb{R}$
(called a real quintic) and the set of real points $L=X(\Bbb{R})$
which is a Lagrangian submanifold of $X$. 
Open mirror symmetry gave the prediction for the generating function for the disc GW invariants 
of $X$ with boundary in $L$ and it was
proved by Pandharipande--Solomon--Walcher \cite{PaSoWa}. 
Then, Walcher \cite{Wa2} further proposed
the open string analogue of BCOV,   
the extended holomorphic anomaly equation,
which is a PDE  for the B-model topological string amplitude $\mathcal{F}^{(g,h)}$
for world-sheets with $g$ handles and $h$ boundaries
\footnote{There is also a proposal by Bonelli--Tanzini \cite{BT}.
}.

At present there are two ways to solve the BCOV
holomorphic anomaly equation.
The one is to repeatedly use the 
identity called
the special geometry relation, or
equivalently to
draw Feynman diagrams associated to 
the perturbative expansion of a certain path
integral \cite{BCOV}.
The other is 
to solve the system of PDE's due to Yamaguchi--Yau \cite{YY}.
They showed that
$\mathcal{F}^{(g)}$ multiplied by $(g-1)$-th powers of the 
Yukawa coupling, is a polynomial in 
finite number of generators
and rewrite the holomorphic anomaly equation as
PDE's with respect to these generators.
This result were then reformulated into a more useful form by 
Hosono--Konishi \cite[\S 3.4]{HoKo}.

It is a natural problem to generalize  these methods 
to Walcher's extended holomorphic anomaly equation. 
The generalization of the Feynman rule method  
can be obtained from the result of  Cook--Ooguri--Yang \cite{COY}.
The objective of this article is to 
generalize Yamaguchi--Yau's and Hosono--Konishi's results to the
extended holomorphic anomaly equation.
It gives more tractable method in computations than the one given by the Feynman rule. 

The organization of the paper is as follows.
In Section 2, we recall the special K\"ahler geometry
of the B-model complex moduli space and
Walcher's extended holomorphic anomaly equation.
We also describe the Feynman rule.
In Section 3, we
rewrite the holomorphic anomaly equation as
PDE's (Theorem \ref{prop:EHAE2}). 
In Section 4, we compute several BPS numbers
by fixing holomorphic ambiguities 
with certain assumptions. 
The assumptions 
in this section are experimental in a sense. 
In appendices we include
the Feynman diagrams and the solution of the PDE's 
for $(g,h)=(0,4)$.

After we finished writing this paper,
we were informed that Alim--L\"ange \cite{AlLa} 
also obtained a generalization of Yamaguchi--Yau's result.

\subsection*{Acknowledgments}

Y.K. thanks Shinobu Hosono 
for valuable discussions and helpful comments. 
This work was initiated when S.M. was staying at 
Institut Mittag-Leffler (Djursholm, Sweden).
He would like to thank the institute for support. 
The authors are also grateful to J.D. L\"ange and T. Okuda for 
informing them of the work of \cite{AlLa} at
the 5th Simons Workshop on Mathematics and Physics held at Stony Brook. 
The work of Y.K. is partly supported by 
JSPS Research Fellowships
for Young Scientists.
Research of S.M. is supported in part  by 
21st Century COE Program at Department of Mathematics, Hokkaido University.   
\section{Walcher's extended holomorphic anomaly equation}

\subsection{Special K\"ahler geometry} 
Recall the mirror family of the quintic hypersurface
$X\subset \mathbb{P}^4$
constructed in \cite{GP}. 
Let $W_{\psi}$ be the hypersurface in $\Bbb{P}^4$ defined by 
\begin{equation*}
\sum_{i=0}^{4} x_i^5 -5\psi \prod_{i=0}^4 x_i=0.
\end{equation*}
After taking the quotient by $(\Bbb{Z}/5\Bbb{Z})^3$ and  
a crepant resolution $Y_{\psi}$ of $W_{\psi}/ (\Bbb{Z}/5\Bbb{Z})^3$, 
we obtain a one-parameter family of Calabi--Yau threefolds 
$\pi :\mathcal{Y}\to \mcpl  :=\mathbb{P}^1\setminus\{0,\frac{1}{5^5},\infty\}$, 
where a local coordinate $z$ of $\mcpl$ is given by
$z=(5\psi)^{-5}$.

Consider the variation of Hodge structure of weight three 
on the middle cohomology groups $H^3(Y_z, \Bbb{C})$. 
Let $0\subset F^3 \subset F^2 \subset F^1 \subset F^0=R^3\pi_*\Bbb{C}\otimes \mathcal{O}_{\mcpl}$ 
be the Hodge filtration 
and $\nabla$ be the Gauss--Manin connection. 
The holomorphic line bundle $\mathcal{L}:=F^3$ over $\mcpl$  
is called the vacuum line bundle 
(the fiber of $\mathcal{L}$ at $z$ is $H^{3,0}(Y_z)$). 
Let $\Omega(z)$ be a local holomorphic section trivializing $\mathcal{L}$, i.e.
a nowhere vanishing $(3,0)$-form on $Y_{z}$. 
The Yukawa coupling $\C$ is define by 
$$\C:=\int_{X_z} \Omega(z)\wedge ({\nabla_{\del}})^3 \Omega(z), $$ 
which is a holomorphic section of ${\rm Sym}^3(T_{\mcpl}^*) \otimes (\mathcal{L}^*)^2$, where 
$T_{\mcpl}^*$ denotes the holomorphic cotangent bundle of $\mcpl$. 
A suitable choice of  $\Omega(z)$  gives (\cite{CDGP})
\begin{equation}\nonumber
C_{zzz}=\frac{5}{(1-5^5z)z^3}.
\end{equation}
It also gives the following 
Picard--Fuchs operator $\mathcal{D}$
which governs the periods of $\Omega(z)$ : 
$$
\mathcal{D}=\theta_z^4-5z(\theta_z+1)(\theta_z+2)
(\theta_z+3)(\theta_z+4),
$$
where $\theta_z=z\frac{d}{dz}$.

Consider the pairing
\begin{equation*}
(\phi, \psi) := \sqrt{-1} \int_{Y_z} \phi \wedge {\psi}, \quad \phi, \psi \in H^{3}(Y_z,\mathbb{C}). 
\end{equation*}
Then $(~~,\overline{\phantom{O}})$ induces a Hermitian metric on $\mathcal{L}$. 
Let $K(z, \bar{z}):=-\log(\Omega(z),\overline{\Omega(z)})$. 
This defines a K\"{a}hler metric (the Weil-Peterson metric)  $\G:=\del\delbar K$  on $\mcpl$. 
There is a unique holomorphic Hermitian connection $D$ on 
$(T_{\mcpl})^{m}\otimes\mathcal{L}^{n}$ whose $(1,0)$-part $D_z$ is given by
\begin{equation*}
D_z =\del +m \Gam+ n(-\del K)~,
\end{equation*}
where  $\Gam=G^{z\bar{z}}\del G_{z\bar{z}}$. 
An important property of   
$\G$ is the following identity called the special geometry relation \cite{Strominger}
\begin{equation}\label{eq:special}
\delbar \Gam = 2G_{z\bar{z}}-\C C_{\bar{z}\bar{z}\bar{z}}e^{2K}
G^{z\bar{z}}G^{z\bar{z}}, 
\end{equation}
where 
$C_{\bar{z}\bar{z}\bar{z}}:=\overline{\C}$.

Now we introduce
the open disk amplitude with two insertions $\triangle_{zz}$,
which is 
the open-sector analogue of the Yukawa coupling.
Let $\mathcal{T}$ be a holomorphic section of $\mathcal{L}^*$ locally given by 
\begin{equation}\label{eq:tau}
  \mathcal{T} = 60~\tau(z),~
\qquad
 \tau(z) =\sum_{n=0}^{\infty}
  \frac{(\frac{7}{2})_{5n}}{({(\frac{3}{2})_n})^5} ~
    z^{n+\frac{1}{2}}. 
\end{equation}
Here $(\alpha)_n$ is the Pochhammer symbol : 
$(\alpha)_n:=\alpha(\alpha+1)\cdots (\alpha+n-1)$ for $n>0$ and $(\alpha)_0:=1$. 
$\mathcal{T}$ is a solution to 
\begin{equation}\label{eq:EXPF}
\mathcal{D}\mathcal{T}=\frac{60}{2^4}\sqrt{z}.
\end{equation} 
Following \cite{Wa2}, we define  
a $C^{\infty}$--section  $\triangle_{zz}$ of ${\rm Sym}^2(T_{\mcpl}^*) \otimes \mathcal{L}^*$ 
by
\begin{equation}
\triangle_{zz}=D_z D_z\mathcal{T}
-\frac{e^K C_{zzz}}{G_{z\bar{z}}}\overline{D}_{\bar{z}}\overline{\mathcal{T}}~, 
\end{equation}
where $\overline{D}_{\bar{z}}=\delbar+\delbar K$ denotes the $(0,1)$-part of $\overline{D}$.  
By (\ref{eq:special}), it follows  that $\triangle_{zz}$ satisfies 
the equation
\begin{equation}\label{eq:hadisc}
\delbar \triangle_{zz} = -\C e^{K} G^{z\bar{z}} \triangle_{\bar{z}\bar{z}}, 
\end{equation}
where $\triangle_{\bar{z}\bar{z}}:=\overline{\triangle_{zz}}$. 

\begin{remark}
In \cite{Wa2}, it is argued that 
$\mathcal{T}$ and $\triangle_{zz}$ should be written as 
\begin{eqnarray*}
\mathcal{T}(z)=\int_{Y_z} \Omega(z) \wedge \tilde{\nu}(z), \quad
\triangle_{zz}=\int _{Y_z} \Omega(z) \wedge \nabla^2 \tilde \nu(z),
\end{eqnarray*} 
where $\tilde{\nu}$ is a $C^{\infty}$--section of the Hodge bundle $F^0$ 
which is the `real horizontal lift' of 
a certain Griffiths normal function $\nu$ 
associated to a family of homologically trivial 2-cycles
\footnote{
By definition,  $\nu$ is a holomorphic and horizontal section of 
the intermediate Jacobian fibration $\mathcal{J}^3 \to \mcpl$ of $\mathcal{Y} \to \mcpl$. 
See, e.g., \cite{Green, Griffiths}.}. 
The normal function $\nu$ should be determined from the Lagrangian submanifold $L\subset X$ 
under the mirror symmetry with D-branes.   
\end{remark}

\subsection{Extended holomorphic anomaly equation}
Let $\F^{(g,h)}$ be the B-model topological string amplitude
of genus $g$ with $h$ boundaries,
and let
$$
\F^{(g,h)}_0:=\F^{(g,h)},\qquad
\F^{(g,h)}_n:=D_z \F^{(g,h)}_{n-1}~~(n\geq 1). 
$$
$\F^{(g,h)}_n$ is a 
$C^{\infty}$--section of the line bundle 
$(T_{\mcpl}^*)^n\otimes \mathcal{L}^{2g-2+h}$.
For $(g,h)=(0,0),(0,1)$,
\begin{equation}
 \F^{(0,0)}_3=C_{zzz},\quad
 \F^{(0,1)}_2=\triangle_{zz}.
\end{equation}
For $(g,h)=(1,0),(0,2)$%
 \footnote{
$\F^{(1,0)}_1$ and $\F^{(0,2)}_1$ are solutions to
the following (extended) holomorphic anomaly equations 
\cite{BCOV}\cite{Wa2}. 
\begin{equation}\nonumber
 \delbar\F^{(1,0)}_1
  =\frac{1}{2}\C\Ct-
     \Big(\frac{\chi}{24}-1\Big)\G
,\quad
  \delbar\F^{(0,2)}_1=
   -\triangle_{zz}\triangle_{\bar{z}}^z+
    \frac{N}{2}\G.
\end{equation}
}%
,
\begin{equation}
\begin{split}
 \F^{(1,0)}_1&=\frac{1}{2}\del \log\Big(
   e^{(4-\frac{\chi}{12})K} \G^{-1} 
       (1-5^5z)^{-\frac{1}{6}} z^{-1-\frac{c_2\cdot H}{12}}
 \Big),
\\
 \F^{(0,2)}_1&=-\triangle_{zz}\triangle^z
  -\frac{1}{2}\C \triangle^z\triangle^z +\frac{N}{2}\del K
 +f^{(0,2)},\quad 
f^{(0,2)}=\frac{75}{2(1-5^5z)}~,
\end{split}
\end{equation}
where 
  $\chi=-200$,
  $c_2\cdot H=50$, 
  $N=1$ and
 $\triangle^z = -\frac{\triangle_{zz}}{\C}$
(cf. \S \ref{sec:propagators}).

As in \cite{Wa2}, define
$$
\Ct = C_{\bar{z}\bar{ z} \bar{z}}e^{2K} \G^{-2}~,
\qquad
\triangle_{\bar{z}}^z = \triangle_{\bar{z}\bar{z}}
                   e^K \G^{-1}~.
$$
Then Walcher's extended holomorphic anomaly equation for
$(g,h)\neq (0,0),(1,0),(0,1),(0,2)$ is
as follows.
\begin{equation}\label{eq:EHAE}
\begin{split}
   \delbar\F^{(g,h)} 
 &=
   \frac{1}{2}\Ct \Big(
   \sum_{g_1,g_2,h_1,h_2}
     \F^{(g_1,h_1)}_1 \F^{(g_2,h_2)}_1 + \F^{(g-1,h)}_2
  \Big)
 -\triangle_{\bar{z}}^z \mathcal{F}^{(g,h-1)}_1~.
\end{split}
\end{equation}
In the RHS, the summation is over 
$g_1,h_1,g_2,h_2\geq 0$
satisfying
 $g_1+g_2=g$,
 $h_1+h_2=h$ and
 $(g_1,h_1),(g_2,h_2)\neq (0,0),(0,1)$.
The second and the third terms in the RHS 
should be set to zero
if $g=0$ and $h=0$, respectively.

\subsection{Propagators and Terminators}\label{sec:propagators}
We introduce 
the propagators 
$S^{zz},S^z,S$ and the terminators 
$\triangle^z,\triangle$ \cite{BCOV,Wa2}. 
By definition, they are solutions to
\begin{equation}
\begin{split}
 \label{eq:delbarS}
 & \delbar S^{zz}=\Ct,   \quad
   \delbar S^z=S^{zz}\G, \quad
   \delbar S=S^z \G,
\\
 & \delbar \triangle^z =\triangle_{\bar{z}}^{z},
   \qquad\qquad
   \delbar \triangle=\triangle^z \G.
\end{split}
\end{equation}
These equation can be solved by using (\ref{eq:special}) and (\ref{eq:hadisc}). 
The solutions of the propagators are \cite[p.391]{BCOV}.
\begin{equation}\label{eq:propagators0}
\begin{split}
 S^{zz} &= 
  \frac{1}{C_{zzz}}
     \big( 2\partial_{z}\log(e^K |f|^2)-
           \partial_{z}\log (v G_{z\bar{z}}) 
      \big)~,
\\
  S^{z} &= \frac{1}{C_{zzz}}
     \big((\partial_{z}\log(e^K|f|)^2 -
      v^{-1}\partial_{z}v\partial_z 
            \log(e^K|f|^2)  
     \big) ~,
\\
  S &= \big(S^{z}-\frac{1}{2}D_{z}S^{zz}
     - \frac{1}{2}(S^{zz})^2C_{zzz} \big)
      \partial_{z}\log (e^{K}|f|^2)
     + \frac{1}{2}D_{z}S^{z}
     + \frac{1}{2}S^{zz}S^{z}C_{zzz}
   ~.
\end{split}
\end{equation}
Here
$f,v$ are holomorphic functions of $z$.
We take $f=z^{-\frac{1}{5}}$ and $v=\frac{dz}{d\psi}$
($z=\frac{1}{5^5\psi^5}$) so that $S^{zz},S^z,S$
do not diverge at $z=\infty$%
\footnote{If rewritten in  the $\psi$-coordinate,
\eqref{eq:propagators0} are the same as
those used in \cite[3.11]{Wa2}\cite[(2.21)]{YY}.}%
.
Solutions of the terminators are \cite[(3.12)]{Wa2}
\begin{equation}\label{eq:terminators}
 \triangle^z=-\frac{\triangle_{zz}}{C_{zzz}},\quad
\triangle=D_z \triangle^z.
\end{equation}

\subsection{Feynman Rule}
We describe the Feynman 
rule which gives a solution to \eqref{eq:EHAE}.

For non-negative integers $g$,$h$,$m$, and $n$,  
we define  $\widetilde{C}^{(g,h)}_{{n}: {m}}$ 
recursively as follows. 
\begin{eqnarray}
\widetilde{C}^{(0,0)}_{0: m}=\widetilde{C}^{(0,0)}_{1: m}
=\widetilde{C}^{(0,0)}_{2: m} = 0, \\
\widetilde{C}^{(0,1)}_{0: m} = \widetilde{C}^{(0,1)}_{1:m}=0,\\
\widetilde{C}^{(0,2)}_{0: 1}=-\frac{N}{2} ,\\
\widetilde{C}^{(1,0)}_{0: 0}=0, \quad
\widetilde{C}^{(1,0)}_{0: 1}=\frac{\chi}{24}-1,\\
{C}^{(g,h)}_{n}= \mathcal{F}^{(g,h)}_n \quad \mathrm{if} \quad 2g-2+h+n \geq1, \\
\widetilde{C}^{(g,h)}_{n: 0}={C}^{(g,h)}_{n}, \quad
\mathrm{if}  \quad 2g-2+h+n\geq 1,\\
\widetilde{C}^{(g,h)}_{n: {m+1}}=(2g-2+h+n+m) \widetilde{C}^{(g,h)}_{n: m}.
\end{eqnarray} 

\begin{definition}
A Feynman diagram $G$ is a finite labeled graph 
$$G=(V; E_0^{\rm in}, E_1^{\rm in}, E_2^{\rm in}, E_0^{\rm out}, E_1^{\rm out}; j),$$ 
which consists of the following data. \\
(i)  
Each vertex $v\in V$ is labeled by a pair of non-negative integers $(g_v, h_v)$.\\
(ii) There are three kinds of inner edges 
$E^{\rm in}=E_0^{\rm in} \sqcup E_1^{\rm in} \sqcup E_2^{\rm in}$  
and two kinds of outer edges $E^{\rm out} = E_0^{\rm out} \sqcup E_1^{\rm out}$. 
The end points of the edges are specified by 
the collection of maps $j=(j^{\rm in}_0,  j^{\rm in}_1, j^{\rm in}_2, j^{\rm out}_0, j^{\rm out}_1)$  :
\begin{eqnarray*} 
&j^{\rm in}_0: E_0^{\rm in} \to (V \times V)/\sigma,  \quad
j^{\rm in}_1: E_1^{\rm in} \to V \times V, \quad 
j^{\rm in}_2: E_2^{\rm in} \to (V \times V)/\sigma, \\
&j^{\rm out}_0:E^{\rm out}_0 \to V,\quad
j^{\rm out}_1:E^{\rm out}_0 \to V, 
\end{eqnarray*}
where $\sigma: V \times V \to V \times V$ is the involution 
interchanging the first and the second factors. 
\end{definition}
In a more plain language,
an edge of type $E_i^{in}$ has both endpoints attached to vertices,
and an edge of type $E_{i}^{out}$ has only one endpoint attached to
a vertex.
We represent edges of types $E_{0}^{in}$ and $E_0^{out}$
by solid lines, edges of types $E_{2}^{in}$ and $E_1^{out}$ by dashed lines
and an edge of type $E_1^{in}$ by a half-solid, half-dashed line.
See Fig. \ref{fig:edges}. 
 
For a vertex $v\in V$,  we set 
\begin{eqnarray*}
&L_{i,v}=\{e\in E_i^{\rm in} \mid j^{\rm in}_i(e) = \{ v, v\} \}, 
\quad L_i = \bigsqcup_{v\in V} L_{i,v}, 
 \; \;(i=0,2),   
\\
&L_{1,v}=\{e \in E_1^{\rm in} \mid j^{\rm in}_1(e)= (v,v )\}.
\end{eqnarray*}
In other words, $L_{i,v}$ is the number of self-loops attached to 
the vertex $v$ whose edges are of the type $E_i^{\rm in}$.
Define non-negative integers $n_v^{\rm in}$, 
$n_v^{\rm out}$, $m_v^{\rm in}$ and $m_v^{\rm out}$ by 
\begin{eqnarray*}
&n_v^{\rm in}=\#\{  e\in E_2^{\rm in} \mid v \in j_2^{\rm in}(e)  \}
+\#\{  e\in E_1^{\rm in} \mid j_1^{\rm in}(e)=(v, \ast)  \}
+\#L_{2,v} +\#L_{1,v}, \\
&m_v^{\rm in}=\#\{  e\in E_0^{\rm in} \mid v \in j_0^{\rm in}(e)  \}
+\#\{  e\in E_1^{\rm in} \mid j_1^{\rm in}(e)=(\ast, v)  \}
+\#L_{0,v} +\#L_{1,v}, \\
&n_v^{\rm out}=\#\{  e\in E_1^{\rm out} \mid v \in j_1^{\rm out}(e)  \}, \quad 
m_v^{\rm out}=\#\{  e\in E_0^{\rm out} \mid v \in j_0^{\rm out}(e)  \}.
\end{eqnarray*}
The valence ${\rm val}(v)$ of $v \in V$ is given by 
${\rm val}(v)=n_v+m_v$, where 
$n_v:=n_v^{\rm in}+n_v^{\rm out}$ (the number of solid lines attached to $v$), 
$m_v:=m_v^{\rm in}+m_v^{\rm out}$ (the number of dashed lines attached to $v$).
See Fig. \ref{fig:vertex}. 

\begin{definition}
(i) For a Feynman diagram $G$, define 
\begin{equation}\label{eq:FG}
F_G = 
\prod_{v\in V} \widetilde{C}^{(g_v,h_v)}_
{n_v: m_v}
\cdot
\prod_{e\in E_0^{\rm in}}(-2S) 
\cdot  \prod_{e\in E_1^{\rm in}}(-S^{z})
\cdot  \prod_{e\in E_2^{\rm in}}(-S^{zz}) 
\cdot \prod_{e\in E_0^{\rm out}} \Delta 
\cdot \prod_{e\in E_1^{\rm out}} \Delta^z. 
\end{equation}
(ii) 
Let ${\rm Aut}(G)$ be the automorphism group 
of $G$. Define the group $A_G$ by 
\begin{equation*}
A_G= \prod_{e \in L_0 \sqcup L_2} \Bbb{Z}/ 2\Bbb{Z}\; \ltimes {\rm Aut} (G), 
\end{equation*}
i.e. $A_G$ fits into the following exact sequence: 
\begin{equation*}
1 \rightarrow (\Bbb{Z}/ 2\Bbb{Z})^{\#L_0 + \# L_2} \rightarrow A_G 
\rightarrow {\rm Aut} (G) \rightarrow 1. 
\end{equation*}
This means that each self-loop of type $E_0^{\rm in}$ and $E_2^{\rm in}$
contributes the factor 2 to $\#A_G$.
\end{definition}

\begin{figure}[t]
\unitlength .10cm
\begin{center}
\begin{picture}(30,30)(40,20)
\thicklines
\put(15,45){(i)}
\put(15,35){$e$}
\put(0,40){\circle*{2}}
\put(-1,36){$v_1$}
\put(0,40){\line(1,0){3}}
\put(5,40){\line(1,0){3}}
\put(10,40){\line(1,0){3}}
\put(15,40){\line(1,0){3}}
\put(20,40){\line(1,0){3}}
\put(25,40){\line(1,0){5}}
\put(30,40){\circle*{2}}
\put(29,36){$v_2$}
\put(35,40){$=-2S$}
\put(15,25){(ii)}
\put(15,15){$e$}
\put(0,20){\circle*{2}}
\put(-1,16){$v_1$}
\put(0,20){\line(1,0){15}}
\put(16,20){\line(1,0){3}}
\put(20,20){\line(1,0){3}}
\put(25,20){\line(1,0){5}}
\put(30,20){\circle*{2}}
\put(29,16){$v_2$}
\put(35,20){$=-S^z$}
\put(15,5){(iii)}
\put(15,-5){$e$}
\put(0,0){\circle*{2}}
\put(-1,-4){$v_1$}
\put(0,0){\line(1,0){30}}
\put(30,0){\circle*{2}}
\put(29,-4){$v_2$}
\put(35,0){$=-S^{zz}$}
\put(70,35){(iv)}
\put(75,25){$e$}
\put(60,30){\circle*{2}}
\put(59,26){$v$}
\put(60,30){\line(1,0){3}}
\put(65,30){\line(1,0){3}}
\put(70,30){\line(1,0){3}}
\put(75,30){\line(1,0){3}}
\put(80,30){\line(1,0){3}}
\put(85,30){\line(1,0){5}}
\put(90,30){\circle{3}}
\put(95,30){$=\Delta$}
\put(72,15){(v)}
\put(75,5){$e$}
\put(60,10){\circle*{2}}
\put(59,6){$v$}
\put(60,10){\line(1,0){30}}
\put(90,10){\circle{3}}
\put(95,10){$=\Delta^{z}$}
\end{picture}
\end{center}
\vspace{3cm}
\caption{
Three types of inner edges and propagators: 
(i) $e\in E_0^{\rm in}$, $j_0^{\rm in}(e)=\{v_1,v_2\}$, 
(ii) $e\in E_1^{\rm in}$, $j_1^{\rm in}(e)=(v_1,v_2)$, 
(iii) $e\in E_2^{\rm in}$, $j_2^{\rm in}(e)=\{v_1,v_2\}$.
Two types of outer edges and terminators:   
(iv) $e\in E_0^{\rm out}$, $j_0^{\rm out}(e)=v$,   
(v) $e\in E_1^{\rm out}$, $j_1^{\rm out}(e)=v$.}
\label{fig:edges}
\end{figure}
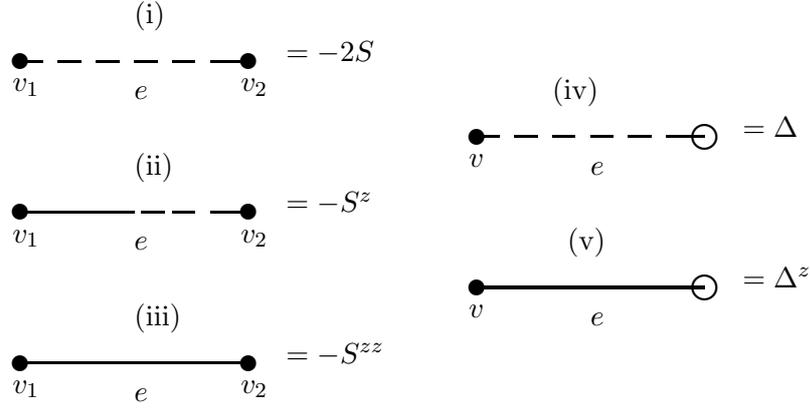
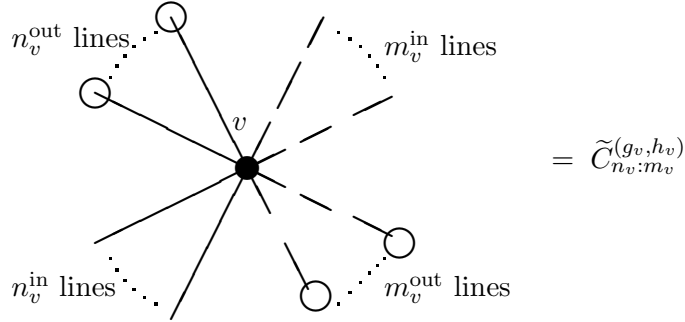
\begin{figure}[t]
\unitlength .10cm
\begin{center}
\begin{picture}(35,35)(55,25)
\thicklines
\put(48,35){$v$}
\put(50,30){\circle*{3}}
\put(50,30){\line(-1,-2){10}}
\put(50,30){\line(-2,-1){20}}
\qbezier[5](32,42)(34,46)(38,48)
\put(19,46){$n_v^{\rm out}$ lines}
\put(30,40){\circle{4}}
\put(50,30){\line(-2,1){20}}
\put(40,50){\circle{4}}
\put(50,30){\line(-1,2){10}}
\qbezier[5](32,18)(34,14)(38,12)
\put(19,13){$n_v^{\rm in}$ lines}
\put(50,30){\line(1,-2){3}}
\put(55,20){\line(1,-2){3}}
\put(50,30){\line(2,-1){5}}
\put(57,26.5){\line(2,-1){4}}
\put(64,23.5){\line(2,-1){5}}
\put(50,30){\line(2,1){5}}
\put(57,33.5){\line(2,1){4}}
\put(64,37){\line(2,1){5}}
\put(50,30){\line(1,2){3}}
\put(54,38){\line(1,2){3}}
\put(58,46){\line(1,2){2}}
\put(70,20){\circle{4}}
\put(59,13){\circle{4}}
\qbezier[5](62,48)(66,46)(68,42)
\put(68,45){$m_v^{\rm in}$ lines}
\qbezier[5](62,12)(66,16)(68,18)
\put(68,13){$m_v^{\rm out}$ lines}
\put(90,30){${\Large=\; \widetilde{C}^{(g_v,h_v)}_{n_v: m_v}}$}
\end{picture}
\end{center}
\vspace{2cm}
\caption{A vertex $v$ labeled by $(g_v, h_v)$
to which $n_v=n^{\rm in}_v + n^{\rm out}_v$ solid lines and 
$m_v=m^{\rm in}_v + m^{\rm out}_v$ dashed lines are attached and its value. }
\label{fig:vertex}
\end{figure}

\begin{definition}\label{def:graph-set}
Let $\Bbb{G}(g,h)$ be the set of (isomorphism classes 
of) Feynman diagrams $G$ which satisfy the following conditions. \\
(i)  $G$ is connected. \\
(ii) For any $v \in V$, 
$ \widetilde{C}^{(g_v,h_v)}_{{n_v}:{m_v}}
 \neq 0$. \\
(iii) $G$ satisfies 
$\sum_{v\in V}g_v + \# E^{\rm in} - \# V +1 =g$ and 
$\sum_{v\in V}h_v +\# E^{\rm out} = h$. \\
(iv) For any $v \in V$, ${\rm val}(v) > 0$.\\
\end{definition}

Note that the set $\Bbb{G}(g,h)$ is a finite set. 
Note also that the graph whose amplitude is $\mathcal{F}^{(g,h)}$,
i.e. the graph
with only one vertex with label $(g,h)$
and without edges is not a member of $\Bbb{G}(g,h)$ by (iv).

Define 
\begin{equation}\label{eq:FD}
\mathcal{F}^{(g,h)}_{\rm FD}:=-\sum_{G \in \Bbb{G}(g,h)}\frac{1}{\#A_G} F_G.
\end{equation}
The next result follows from \cite{COY}. 
\begin{proposition}
 $
  \delbar \mathcal{F}^{(g,h)}_{\rm FD}
 =\text{the RHS of }\eqref{eq:EHAE}
 $.
\end{proposition}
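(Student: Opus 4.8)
The conceptual skeleton is the generating-function form of the Feynman rule: \eqref{eq:FD} resums a Gaussian expansion whose propagators are $S^{zz},S^z,S,\triangle,\triangle^z$ and whose vertices are the $\widetilde C^{(g_v,h_v)}_{n_v:m_v}$, and the claim is the first-order compatibility of this expansion with the ``evolution'' \eqref{eq:delbarS}. The plan is to make this concrete by differentiating the finite sum \eqref{eq:FD} with the Leibniz rule and matching terms against the right-hand side of \eqref{eq:EHAE}. First I would record the $\delbar$-derivatives of all building blocks. For propagators and terminators these are exactly \eqref{eq:delbarS}. For the vertices I need $\delbar\F^{(g,h)}_n$ for every $n\ge 0$; I would obtain it by induction on $n$ from $\F^{(g,h)}_n=D_z\F^{(g,h)}_{n-1}$, writing $\delbar\F^{(g,h)}_n=D_z\,\delbar\F^{(g,h)}_{n-1}+[\delbar,D_z]\F^{(g,h)}_{n-1}$. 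The base case $n=0$ is \eqref{eq:EHAE} itself, and the commutator $[\delbar,D_z]$ on a section of $(T^*_{\mcpl})^{n}\otimes\mathcal{L}^{2g-2+h}$ is read off from the connection $D_z=\del+n(-\Gam)+(2g-2+h)(-\del K)$: the $\mathcal{L}$-curvature contributes a multiple of $\delbar\del K=\G$, while the $T^*_{\mcpl}$-curvature contributes $\delbar\Gam$, which by the special geometry relation \eqref{eq:special} splits into a $\G$-term and a $\C\,\Ct$-term. The coefficient multiplying $\G$ in this step is precisely the weight $2g-2+h+n$, which is exactly the combination in the recursion $\widetilde C^{(g,h)}_{n:m+1}=(2g-2+h+n+m)\widetilde C^{(g,h)}_{n:m}$; the subscript $m$ thus bookkeeps how many $\G$-producing connection factors have already been generated.

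Next I would apply $\delbar$ to $\mathcal{F}^{(g,h)}_{\rm FD}=-\sum_{G\in\Bbb G(g,h)}\tfrac1{\#A_G}F_G$ and sort the terms by which factor of \eqref{eq:FG} was hit. Differentiating an $E_2^{\rm in}$ edge gives $\delbar(-S^{zz})=-\Ct$ and removes the edge; since the overall sign in \eqref{eq:FD} flips this to $+\Ct$, these contributions, reorganized over the ways of cutting, combine with the $\C\,\Ct$-pieces coming from the vertex commutators above. Cutting such an edge either disconnects $G$ into two subgraphs or lowers its first Betti number by one, and the two half-edges created at the cut become new $D_z$-legs; summing over $\Bbb G(g_1,h_1)\times\Bbb G(g_2,h_2)$ and over $\Bbb G(g-1,h)$ and recognizing the resulting subgraph sums as the Feynman expansions of the lower amplitudes, these reorganize into $\tfrac12\Ct\big(\sum\F^{(g_1,h_1)}_1\F^{(g_2,h_2)}_1+\F^{(g-1,h)}_2\big)$, the factor $\tfrac12$ and the $\Bbb Z/2\Bbb Z$ factors in $A_G$ accounting for the symmetry of the cut. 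Similarly, differentiating an $E_1^{\rm out}$ terminator gives $\delbar\triangle^z=\triangle_{\bar z}^z$ and, after the overall sign, leaves a sum over $\Bbb G(g,h-1)$ with one extra $D_z$-leg, reproducing $-\triangle_{\bar z}^z\,\F^{(g,h-1)}_1$. These two collections are exactly the two terms on the right-hand side of \eqref{eq:EHAE}.

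It then remains to show that everything else cancels. The surviving terms are those carrying an explicit factor $\G$: from $\delbar S^z=S^{zz}\G$, $\delbar S=S^z\G$ and $\delbar\triangle=\triangle^z\G$, which slide a propagator one step along the chain while emitting a $\G$, together with the $\G$-proportional pieces of $[\delbar,D_z]$ acting at the vertices. I would organize the cancellation by the half-edge to which the stray $\G$ attaches: the $\G$ emitted when a propagator incident to a vertex $v$ is lowered is matched against the connection term generated at $v$, the weight bookkeeping being exactly the recursion for $\widetilde C^{(g,h)}_{n:m}$; summed over all graphs and all incident half-edges these telescope to zero. This $\G$-cancellation is the main obstacle, as it is the only genuinely global combinatorial identity. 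In the closed-string case it is the content of the BCOV Feynman-rule argument \cite{BCOV}; here one must verify that it persists after adding the open-string data—the terminators $\triangle,\triangle^z$, the disk insertions at $\F^{(0,1)}_2=\triangle_{zz}$, and the $h$-dependent vertices. Precisely this extended identity is supplied by \cite{COY}, so in practice the remaining work reduces to translating our conventions—the symmetry factors $\#A_G$, the vertex recursion for $\widetilde C^{(g,h)}_{n:m}$, and the edge/terminator dictionary of Figure \ref{fig:edges}—into theirs and invoking their result.
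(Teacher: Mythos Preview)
Your proposal is correct and aligns with the paper's treatment: the paper gives no argument beyond the sentence ``The next result follows from \cite{COY}.'' You supply a heuristic outline of the underlying mechanism---Leibniz rule on \eqref{eq:FD}, the edge/terminator derivatives \eqref{eq:delbarS}, the commutator $[\delbar,D_z]$ controlled by \eqref{eq:special}, and the telescoping of the $\G$-proportional terms---but in the end you, like the paper, defer the combinatorial cancellation to \cite{COY}.
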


Therefore, the general solution 
$\mathcal{F}^{(g,h)}$ of Walcher's 
holomorphic anomaly equation is of the form 
\begin{equation}
\mathcal{F}^{(g,h)} = \mathcal{F}^{(g,h)}_{\rm{FD}} + f^{(g,h)},  
\end{equation}
where $f^{(g,h)}$ is the holomorphic ambiguity 
which can not be determined from the equation \eqref{eq:EHAE}. 
\subsection{Holomorphic ambiguity}
Recall that
the holomorphic ambiguity $f^{(g,0)}$ ($g\geq 2$)
is of the form \cite{BCOV}\cite[(2.30)]{YY}
$$
f^{(g,0)}=\frac{a_0 +a_1 z+\cdots+a_{2g-1}z^{2g-1}}{(1-5^5z)^{2g-2}}
+\sum_{i=0}^{\lfloor \frac{2g-2}{5}\rfloor} z^j
$$
for the closed sector $h=0$. 
Huang--Klemm--Quackenbush  \cite{HKQ} determined the holomorphic
ambiguity up to
$g\leq 51$ by using the 
vanishing of the BPS numbers $n^{g}_d$ (cf. footnote \ref{ft:GoVa}),
the gap condition at the conifold point 
$z=\frac{1}{5^5}$
and the regularity condition at the orbifold point
$z=\infty$.

For $h>0$, 
we assume that 
$\F^{(g,h)}$ has poles of order at most $2g-2+h$ at $z=\frac{1}{5^5}$
and also that the asymptotic 
behaviour at $z=\infty$ is 
$F^{(g,h)}\sim z^{\frac{2g-2+h}{2}}$ \cite[\S 3.3]{Wa2}.
Therefore we put the following ansatz for
$f^{(g,h)}$:
\begin{equation}\label{eq:hol-amb}
\begin{split}
f^{(g,h)}&=
\frac{
a_0+a_1 z+\cdots+a_{3g-3+\frac{3h}{2}} z^{3g-3+\frac{3h}{2}}
}
{(1-5^5z)^{2g-2+h}}
\quad (h \text{ even}), 
\\
f^{(g,h)}&= \frac{\sqrt{z}(
a_0+a_1 z+\cdots+
a_{3g-3+\frac{3h-1}{2}} 
z^{3g-3+\frac{3h-1}{2}})
}
{(1-5^5z)^{2g-2+h}}
\quad (h \text{ odd}).
\end{split}
\end{equation}

\section{Polynomiality and PDE's for $\F^{(g,h)}$}
In  this section, we consider extending Yamaguchi--Yau's and Hosono--Konishi's results 
\cite{YY, HoKo} to $\F^{(g,h)}$. 

\subsection{The generators of polynomial ring}

Let $\theta_z=z \frac{\partial}{\partial z}$.

We define
\begin{equation}\label{eq:definitions}
\begin{split}
&
A_p=\frac{\theta_z \G}{\G}~,
\qquad
B_p=\frac{\theta_z e^{-K}}{e^{-K}}
\qquad (p=1,2,\ldots)~,
\\&
Q_p=z^{\frac{1}{2}}\theta_z \mathcal{T} 
\qquad\qquad\qquad(p=0,1,2,\ldots)~,
\\&
R_1=z^{\frac{5}{2}}\frac{e^{K}C_{zzz}}{G_{z\bar{z}}}
\overline{D}_{\bar{z}} \overline{\mathcal{T}}
~,\qquad
R_2=z^{\frac{7}{2}}e^{K} C_{zzz} \mathcal{T}.
\end{split}
\end{equation}

The generators $A_p$'s and $B_p$'s were defined in \cite{YY}. 
The new ingredients are $Q_p$'s, $R_1$ and $R_2$ 
which are necessary for incorporating 
$\triangle_{zz}$. 

Consider the polynomial ring
\begin{equation}
I=\mathbb{C}(z)[A_1,B_1,B_2,B_3,Q_0,Q_1,Q_2,Q_3,R_1,R_2]
\end{equation}
with coefficients in the field 
of rational functions $\mathbb{C}(z)$.

\begin{lemma}
1.  $A_p \in I$ $(p\geq 2)$, $B_p \in I$ $(p\geq 4)$, $Q_p \in I$ $(p\geq 4)$.
\\
2. $\theta_z  I\subseteq I$
\end{lemma}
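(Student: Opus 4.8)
The plan is to reduce everything to the single statement that $\theta_z g\in I$ for each of the ten generators $g\in\{A_1,B_1,B_2,B_3,Q_0,Q_1,Q_2,Q_3,R_1,R_2\}$. Granting this, assertion 2 ($\theta_z I\subseteq I$) is immediate from the Leibniz rule, and assertion 1 then follows by induction on $p$: the definitions give the elementary recursions
\[
\theta_z A_p=A_{p+1}-A_1A_p,\qquad \theta_z B_p=B_{p+1}-B_1B_p,\qquad \theta_z Q_p=Q_{p+1}+\tfrac12 Q_p,
\]
so e.g.\ $A_{p+1}=\theta_z A_p+A_1A_p\in I$ whenever $A_p\in I$. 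Now $\theta_z g$ is automatically in $I$ for $g\in\{B_1,B_2,Q_0,Q_1,Q_2\}$ (it returns the next generator), while for $g\in\{A_1,B_3,Q_3\}$ the recursions reduce it to the three ``top'' containments $A_2\in I$, $B_4\in I$, $Q_4\in I$; the two cases $g=R_1,R_2$ I would treat directly. So the whole lemma comes down to these five facts.

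For the closed--sector generators the needed statements $B_4\in I$ and $A_2\in I$ are exactly Yamaguchi--Yau \cite{YY} (reformulated in \cite{HoKo}), and I would only recall the mechanism. Since $e^{-K}=(\Omega,\overline{\Omega})$ with $\overline{\Omega}$ anti-holomorphic, $\del$ differentiates only $\Omega$; as $\mathcal{D}\Omega$ is exact in the variation of Hodge structure, $\mathcal{D}(e^{-K})=\sqrt{-1}\int(\mathcal{D}\Omega)\wedge\overline{\Omega}=0$. Dividing by $e^{-K}$ and using that $\bigl((\theta_z+1)(\theta_z+2)(\theta_z+3)(\theta_z+4)e^{-K}\bigr)/e^{-K}$ is a polynomial in $B_1,\dots,B_4$ with leading term $B_4$, the Picard--Fuchs relation becomes $(1-5z)B_4=5z\,P(B_1,B_2,B_3)$, whence $B_4\in I$. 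The reduction $A_2\in I$ is the one place the Weil--Petersson curvature is genuinely used: I would differentiate the special geometry relation \eqref{eq:special}, insert $\delbar A_1=z\,\delbar\Gam=z\bigl(2\G-\C\,\overline{\C}\,e^{2K}(G^{z\bar z})^{2}\bigr)$ and $\delbar B_1=-z\G$ together with the explicit $\C$, and extract the holomorphic content, following the one--modulus case of \cite{YY}.

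The $Q$--tower is handled by the same device applied to the \emph{inhomogeneous} equation \eqref{eq:EXPF}, $\mathcal{D}\mathcal{T}=\tfrac{15}{4}z^{1/2}$. Multiplying by $z^{1/2}$ and using $z^{1/2}\theta_z^{k}\mathcal{T}=Q_k$ (so that $z^{1/2}(\theta_z+a_1)\cdots(\theta_z+a_4)\mathcal{T}$ is a $\mathbb{C}$--linear combination of $Q_0,\dots,Q_4$ with leading term $Q_4$), the equation becomes $(1-5z)Q_4=5z\,\widetilde P(Q_0,Q_1,Q_2,Q_3)+\tfrac{15}{4}z$ with $\widetilde P$ linear, so $Q_4\in I$.

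The open--sector generators $R_1,R_2$ are the crux, and the key observation I would use is the identity
\[
\del\bigl(\overline{D}_{\bar z}\overline{\mathcal{T}}\bigr)=\G\,\overline{\mathcal{T}},
\]
valid because $\overline{\mathcal{T}}$ is anti-holomorphic ($\del\overline{\mathcal{T}}=0$) and $\del\delbar K=\G$, so differentiating $\overline{D}_{\bar z}\overline{\mathcal{T}}=(\delbar+\delbar K)\overline{\mathcal{T}}$ leaves only $(\del\delbar K)\overline{\mathcal{T}}$. Differentiating $R_1=z^{5/2}e^{K}\C\,G^{z\bar z}\,\overline{D}_{\bar z}\overline{\mathcal{T}}$ and using $\theta_z\log e^{K}=-B_1$, $\theta_z\log\G=A_1$ and $\theta_z\log\C\in\mathbb{C}(z)$, every factor except $\overline{D}_{\bar z}\overline{\mathcal{T}}$ contributes a coefficient in $\mathbb{C}(z)[A_1,B_1]$, while the identity turns the remaining term into $z^{5/2}e^{K}\C\,G^{z\bar z}\cdot z\G\,\overline{\mathcal{T}}=z^{7/2}e^{K}\C\,\overline{\mathcal{T}}=R_2$; here the precise weights $z^{5/2},z^{7/2}$ and the cancellation $G^{z\bar z}\G=1$ are exactly what make the cross--term land on $R_2$ rather than on a new object. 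Thus $\theta_z R_1=\bigl(\tfrac52-B_1-A_1+\theta_z\log\C\bigr)R_1+R_2\in I$, and $\theta_z R_2=\bigl(\tfrac72-B_1+\theta_z\log\C\bigr)R_2\in I$ since $\overline{\mathcal{T}}$ is $\del$--closed. I expect the main difficulty to be precisely this open--sector bookkeeping---checking that $\theta_z R_1$ reproduces $R_2$ and that no further generator is forced---since the closed--sector reductions are already supplied by \cite{YY,HoKo}.
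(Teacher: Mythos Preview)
Your proposal is correct and follows essentially the same route as the paper: establish the recursions $\theta_z A_p=A_{p+1}-A_1A_p$, $\theta_z B_p=B_{p+1}-B_1B_p$, $\theta_z Q_p=Q_{p+1}+\tfrac12 Q_p$ and the explicit formulas for $\theta_z R_1,\theta_z R_2$, then reduce everything to the three ``top'' containments $A_2\in I$ (special geometry), $B_4\in I$ (Picard--Fuchs), $Q_4\in I$ (inhomogeneous Picard--Fuchs). Your derivation of $\theta_z R_1=(\tfrac52-A_1-B_1+\theta_z\log\C)R_1+R_2$ via the identity $\del(\overline{D}_{\bar z}\overline{\mathcal T})=\G\,\overline{\mathcal T}$ is in fact more explicit than the paper, which simply records the formula; the only loose phrasing is ``differentiate the special geometry relation'' for $A_2$---the actual mechanism (as in \cite{YY}) is to check that $\delbar$ of the candidate expression vanishes and then fix the holomorphic remainder $h(z)$---but since you defer to \cite{YY} for this known closed-sector step, no gap arises.
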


\begin{proof}
First, notice that the logarithmic derivation $\theta_z$ 
acts as follows:
\begin{equation}\label{eq:LogDer}
\begin{split}
&\theta_z A_p=A_{p+1}-A_pA_1~,\qquad
\theta_z B_p=B_{p+1}-B_pB_1~,\\
&\theta_z Q_p=\frac{1}{2}Q_p+Q_{p+1}~,\\
&\theta_z R_1=\Big(\frac{5}{2}-A_1-B_1+
\frac{\theta_z C_{zzz}}{C_{zzz}}\Big)R_1+R_2,
\\
&\theta_z R_2=\Big(\frac{7}{2}-B_1+
\frac{\theta_z C_{zzz}}{C_{zzz}}\Big)R_2~.
\end{split}
\end{equation}

Next we show $A_2,B_4,Q_4\in I$.
By the special geometry relation (\ref{eq:special}), we have
\begin{equation}
   A_2 = -2A_1B_1+2B_1^2+2B_1-4B_2+
          \frac{\theta_z(z C_{zzz})}{zC_{zzz}}(1+A_1+2B_1)
          +h(z)~.
\end{equation}
Here $h(z)$ is determined by
comparing the
behaviour  of the RHS and the LHS at $z=0$:
\begin{equation}
 \nonumber
  h(z) = \frac{1-3\cdot 5^4 z}{1-5^5z}~.
\end{equation}
Let us write the Picard--Fuchs operator as
$\mathcal{D}=\sum_{p=0}^4H_p(z){\theta_z}^p$   
where $H_p(z)\in \mathbb{C}[z]$.
Since $\mathcal{D} e^{-K}=0$, $B_4$ satisfies
\begin{equation}
 \label{eq:B4}
    B_4=-\sum_{p=1}^3 \frac{H_p(z)}{H_4(z)} B_p
         -\frac{H_0(z)}{H_4(z)} = 0~.
\end{equation}
Moreover, since $\mathcal{T}$ satisfies (\ref{eq:EXPF}), 
\begin{equation}
 \label{eq:Q4}
   Q_4= -\sum_{p=0}^3 \frac{H_p(z)}{H_4(z)} Q_p +
         \frac{60}{2^4}z.
\end{equation}

These together with \eqref{eq:LogDer} implies that
$I$ is closed with respect to the logarithmic derivation $\theta_z$.
Moreover,
by applying $\theta_z$ recursively,
we can show that 
$A_p \in I$ $(p\geq 3)$, $B_p \in I$ $(p\geq 5)$, $Q_p \in I$ $(p\geq 5)$.
\end{proof}

\subsection{Polynomiality}
For simplicity, we will use the notation 
\begin{equation}
V_1=A_1+2B_1+1,\qquad
V_2=B_2-B_1V_1
\end{equation}
from here on.

Since $D_z$ acts on 
$(T_{\mcpl})^m\otimes \mathcal{L}^n$
as 
\begin{equation}\nonumber 
   D_z  =\frac{1}{z}(\theta_z+mA_1+nB_1)~, 
\end{equation}
we have the following 
\begin{lemma}\label{rem:cov-der}
Let $f$ be a section of $(T_{\mcpl})^m\otimes \mathcal{L}^n$. 
Then $D_z f\in I$ if $f\in I$
and $D_z f\in\sqrt{z} I$ if $f\in \sqrt{z} I$.
\end{lemma}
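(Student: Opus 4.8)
The plan is to deduce everything directly from the displayed action formula
\[
  D_z = \frac{1}{z}\bigl(\theta_z + mA_1 + nB_1\bigr)
\]
together with the preceding lemma. Three facts carry all the weight: $I$ is a commutative ring, it contains the generators $A_1$ and $B_1$, and it is closed under $\theta_z$ (part~2 of the previous lemma). Since $\mathbb{C}(z)\subseteq I$ and $z^{-1}$ is a unit in $\mathbb{C}(z)$, multiplication by $\tfrac{1}{z}$ maps $I$ into itself, so $\tfrac1z I = I$. These are the only ingredients I would use.

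For the case $f\in I$ the argument is immediate: $\theta_z f\in I$ by closedness under $\theta_z$, and $A_1 f,\, B_1 f\in I$ because $I$ is a ring containing $A_1,B_1$. Hence $\theta_z f + mA_1 f + nB_1 f\in I$, and applying the prefactor $\tfrac1z$ keeps it in $I$, giving $D_z f\in I$. For the case $f\in\sqrt{z}\,I$ I would write $f=\sqrt{z}\,g$ with $g\in I$ and use the twisted Leibniz rule on the square-root factor, $\theta_z\sqrt{z}=\tfrac12\sqrt{z}$, which yields $\theta_z f = \sqrt{z}\bigl(\tfrac12 g+\theta_z g\bigr)\in\sqrt{z}\,I$. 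Multiplication by the generators is harmless, since $A_1 f=\sqrt{z}\,A_1 g$ and $B_1 f=\sqrt{z}\,B_1 g$ both lie in $\sqrt{z}\,I$, so $\theta_z f + mA_1 f + nB_1 f\in\sqrt{z}\,I$. Finally $\tfrac{1}{z}\sqrt{z}\,I=\sqrt{z}\cdot\tfrac{1}{z}I=\sqrt{z}\,I$ because $\tfrac1z I=I$, and thus $D_z f\in\sqrt{z}\,I$.

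Conceptually the lemma just records that $I$ is a $\theta_z$-differential ring and that $\sqrt{z}\,I$ is an $I$-submodule of $\mathbb{C}(z)[A_1,\ldots,R_2][\sqrt{z}]$ stable under $\theta_z$; once $\theta_z\sqrt{z}=\tfrac12\sqrt{z}$ is noted, both stabilities follow instantly. I do not expect a genuine obstacle here. The one point one must not overlook is that applying $D_z$ introduces the $\tfrac1z$ factor, which a priori could push the result out of $I$ or $\sqrt{z}\,I$; the reason it does not is precisely that $\tfrac1z$ is invertible in the coefficient field $\mathbb{C}(z)$ and therefore preserves both $I$ and the module $\sqrt{z}\,I$.
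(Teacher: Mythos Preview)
Your proposal is correct and follows exactly the paper's approach: the paper simply records the formula $D_z=\tfrac{1}{z}(\theta_z+mA_1+nB_1)$ immediately before the lemma and presents the statement as an obvious consequence of it together with the closure $\theta_z I\subseteq I$ from the preceding lemma. You have merely spelled out the routine verification (including the observation $\theta_z\sqrt{z}=\tfrac12\sqrt{z}$) that the paper leaves implicit.
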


\begin{lemma}\label{prop:FI}
$\F^{(g,h)}_{n}\in z^{\frac{h}{2}}~I$.
\end{lemma}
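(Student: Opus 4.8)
The plan is to prove the claim $\F^{(g,h)}_n \in z^{h/2} I$ by induction on the total ``complexity'' of the pair $(g,h)$, using the recursive structure provided by the extended holomorphic anomaly equation \eqref{eq:EHAE} together with the Feynman-rule expression \eqref{eq:FD}. The natural strategy is to show that every building block appearing in $\F^{(g,h)}_{\mathrm{FD}}$ lies in the appropriate $z$-shifted copy of $I$, and then that applying $D_z$ (which by Lemma \ref{rem:cov-der} preserves both $I$ and $\sqrt{z}\,I$) keeps us inside $z^{h/2}I$ after taking the $n$-th covariant derivative.

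First I would analyze the propagators $S^{zz}, S^z, S$ and the terminators $\triangle^z, \triangle$. From the explicit formulas \eqref{eq:propagators0} one checks, using $\partial_z \log(e^K|f|^2) = \frac{1}{z}(-B_1 + \text{holomorphic})$ and $\partial_z\log(\G) = \frac{1}{z}A_1$, that $S^{zz}, S^z, S \in I$ (these are exactly the Yamaguchi--Yau generators rewritten, so this is essentially known). For the terminators, $\triangle^z = -\triangle_{zz}/C_{zzz}$, and since $\triangle_{zz} = D_zD_z\mathcal{T} - R_1\cdot z^{-5/2}$-type expression, I would verify that $\triangle_{zz} \in \sqrt{z}\,I$ using the definitions $Q_p = z^{1/2}\theta_z\mathcal{T}$ and $R_1 = z^{5/2}\frac{e^KC_{zzz}}{\G}\overline{D}_{\bar z}\overline{\mathcal{T}}$: the $D_zD_z\mathcal{T}$ part produces $Q_p$'s divided by appropriate powers of $z$, landing in $\sqrt{z}\,I$, and the $R_1$ term, after multiplication by $e^KC_{zzz}/\G$ and the compensating powers, also contributes to $\sqrt{z}\,I$. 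Hence $\triangle^z \in \sqrt{z}\,I$ and $\triangle = D_z\triangle^z \in \sqrt{z}\,I$ by Lemma \ref{rem:cov-der}. The factor $\sqrt{z}$ here is precisely the source of the $z^{h/2}$ weight: each boundary contributes one insertion of $\mathcal{T}$, hence one half-integer power.

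Next I would set up the induction. The base cases are the explicitly listed amplitudes: $\F^{(0,0)}_3 = C_{zzz} \in I = z^0 I$, and $\F^{(0,1)}_2 = \triangle_{zz} \in \sqrt{z}\,I = z^{1/2}I$, matching $h=1$; similarly I would check $\F^{(1,0)}_1$ and $\F^{(0,2)}_1$ against their stated closed forms, confirming $\F^{(1,0)}_1 \in I$ and $\F^{(0,2)}_1 \in I = z^{2/2}I$ (note $h=2$ gives weight $z^1$, and one verifies the displayed expression, built from $\triangle^z\triangle^z$, $C_{zzz}\triangle^z\triangle^z$, $\partial_z K$ and $f^{(0,2)}$, indeed carries an integer power of $z$ since two terminators each contribute $\sqrt{z}$). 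For the inductive step I would use that $\F^{(g,h)} = \F^{(g,h)}_{\mathrm{FD}} + f^{(g,h)}$. In the Feynman-rule sum $F_G$ from \eqref{eq:FG}, each vertex contributes $\widetilde C^{(g_v,h_v)}_{n_v:m_v}$, which by the inductive hypothesis (on the smaller labels $(g_v,h_v)$) lies in $z^{h_v/2}I$; each inner edge contributes a propagator in $I$; and each outer edge contributes a terminator in $\sqrt{z}\,I$. By condition (iii) of Definition \ref{def:graph-set}, $\sum_v h_v + \#E^{\mathrm{out}} = h$, so the total half-integer weight collected is $\sum_v (h_v/2) + \#E^{\mathrm{out}}/2 = h/2$, giving $F_G \in z^{h/2}I$ and hence $\F^{(g,h)} = \F^{(g,h)}_0 \in z^{h/2}I$ (the holomorphic ambiguity $f^{(g,h)}$ in \eqref{eq:hol-amb} visibly lies in $z^{h/2}I$, since $h$ even gives a rational function and $h$ odd supplies the matching $\sqrt{z}$). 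Finally, applying Lemma \ref{rem:cov-der} $n$ times yields $\F^{(g,h)}_n = D_z^n\F^{(g,h)} \in z^{h/2}I$ as claimed.

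The main obstacle I anticipate is the careful bookkeeping of the half-integer powers of $z$: I must confirm that the weight $z^{h/2}$ assembles correctly across all vertices and outer edges in every admissible graph, matching condition (iii), and that no additional fractional powers sneak in from the propagators or from the covariant derivatives acting on the $R_1, R_2, Q_p$ generators. In particular, the action \eqref{eq:LogDer} of $\theta_z$ on $R_1$ involves the factor $\theta_z C_{zzz}/C_{zzz}$, which is rational in $z$, so I must check it does not disturb membership in $I$; and the coupling between $R_1, R_2$ and the $Q_p$'s via $\mathcal{T}$ must close within $I$ under repeated differentiation. This is where the closedness statement of the preceding Lemma (part 2, $\theta_z I \subseteq I$) does the essential work, so the proof hinges on invoking it cleanly rather than re-deriving the derivation rules.
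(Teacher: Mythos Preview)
Your proposal is correct and follows essentially the same route as the paper: induction on $(g,h)$ with the four low base cases $(0,0),(1,0),(0,1),(0,2)$ checked directly, then for the inductive step using the Feynman-rule decomposition $\F^{(g,h)}=\F^{(g,h)}_{\mathrm{FD}}+f^{(g,h)}$, the facts $S^{zz},S^z,S\in I$ and $\triangle^z,\triangle\in\sqrt{z}\,I$, condition (iii) of Definition \ref{def:graph-set} to assemble the weight $z^{h/2}$, the ansatz \eqref{eq:hol-amb} for the ambiguity, and finally Lemma \ref{rem:cov-der} to handle $n\geq 1$. The only cosmetic difference is that you spell out the $\sqrt{z}$ bookkeeping more explicitly (note also that since $I$ is a $\mathbb{C}(z)$-algebra, $z^{h/2}I=I$ for $h$ even and $=\sqrt{z}\,I$ for $h$ odd, so only the parity of the $\sqrt{z}$-count matters).
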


\begin{proof}

We prove the lemma by induction on $(g,h)$.

For $(g,h)=(0,0),(1,0),(0,1),(0,2)$, the lemma is true since
\begin{equation}\label{eq:induction}
\begin{split}
 \F_{3}^{(0,0)}&=\C\in I,
\\
 {\F}_1^{(1,0)}
  & = \frac{1}{2z}
     \Big[-A_1-\frac{62}{3}B_1-\frac{31}{6}-
     \frac{1}{6}\frac{\theta_z(1-5^5z)}{(1-5^5z)}\Big]~\in I,
\\
 \F^{(0,1)}_{2}&=\triangle_{zz}
   = z^{-\frac{5}{2}}[Q_2-V_1Q_1-V_2 Q_0-R_1]~\in \sqrt{z}~I,
\\ 
 \F^{(0,2)}_{1}&=\frac{1\triangle_{zz}}{2\C}-\frac{B_1}{2z} 
  +f^{(0,2)} ~ \in I.
\end{split}
\end{equation}

For $(g,h)\neq (0,0),(1,0),(0,1),(0,2)$,
assume that $\F^{(g',h')}_{n}\in z^{\frac{h}{2}}I$
holds for every $(g',h')\neq (g,h)$ such that
$g'\leq g$ and $h'\leq h$. 
Consider the contribution $F_G$ from 
a Feynman diagram $G\in \Bbb{G}(g,h)$ to $\F^{(g,h)}_{\rm{FD}}$ (\ref{eq:FG}).
The assumption of the induction implies that a vertex factor satisfies
$\widetilde{C}^{(g_v,h_v)}_{n_v;m_v}\in z^{\frac{h_v}{2}}I$.
As for edge factors, the followings hold.
From \eqref{eq:propagators0},
\begin{equation}
 \label{eq:propagators}
\begin{split}
S^{zz} = \frac{1}{z C_{zzz}}
           \Big(-A_1-2B_1-\frac{8}{5}\Big)\in I,
\quad
S^z = \frac{1}{z^2 C_{zzz}}
               \Big(B_2+3B_1+\frac{2}{25}\Big)\in I.
\end{split}
\end{equation}
By Lemma \ref{rem:cov-der}, $S$ also satisfies $S\in I$.
Similarly by \eqref{eq:induction} the terminators \eqref{eq:terminators} satisfy
$$
 \triangle^z,\triangle ~\in \sqrt{z}I.
$$
Therefore, by the condition (iii) in Definition \ref{def:graph-set},
we have $F_G\in z^{\frac{h}{2}}I$ 
and thus $\F^{(g,h)}_{\rm{FD}}\in z^{\frac{h}{2}}I$.
As to the holomorphic ambiguity $f^{(g,h)}$,  it satisfies
$f^{(g,h)}\in z^{\frac{h}{2}}\mathbb{C}(z)\subset
z^{\frac{h}{2}}~I$ by assumption \eqref{eq:hol-amb}.
Therefore $\F^{(g,h)}\in I$. 
For $n\geq 1$, $\F^{(g,h)}_n \in I$ by Lemma \ref{rem:cov-der}.
\end{proof}

\begin{definition}
\label{def:Pghn}
Let  $g,h,n\geq 0$ be integers satisfying $2g-2+h+n>0$.
We define
\begin{equation}\label{eq-def:Pghn}
P^{(g,h)}_n = (z^3 \C)^{g+h-1}z^{\frac{h}{2}}
            \F^{(g,h)}_{n}~,
\quad P^{(g,h)}:=P^{(g,n)}_0. 
\end{equation}
For other values of $(g,h,n)$, we set $P^{(g,h)}_n=0$. 
\end{definition}
Lemma \ref{prop:FI} implies that
$$
P_n^{(g,h)}~\in~ I.
$$

\begin{remark}
Let $x=z^3 \C=\frac{5}{1-5^5z}$.
Consider the graded ring 
$$\Bbb{C}[x,A_1,B_1,B_2,B_3,Q_0,\ldots , Q_3, R_1,R_2],$$ 
where the grading is given by 
 deg $x=1$,
 deg $A_1=1$, 
 deg $B_p=p$ ($p=1,2,3$),
 deg $Q_p=p$ ($p=0,1,2,3$),
 deg $R_1=2$ 
 and deg $R_2=3$.
Then 
$P^{(g,h)}$ belongs to this ring
and its degree is at most $3(g+h-1)$.

\end{remark}

\subsection{Rewriting the extended holomorphic anomaly
equation \eqref{eq:EHAE}}
There are relations among the $\delbar$-derivatives of the generators 
\eqref{eq:definitions}. 
\begin{lemma}
\begin{equation}\label{eq:delbar}
\begin{split}
\partial_{\bar{z}}B_2&=V_1\partial_{\bar{z}}B_1~,
\\
\delbar B_3&=
(A_2+2A_1+3B_1+3B_2+3A_1B_1+1)
\delbar B_1~
\\ 
        &=\Big(-V_2+\frac{\theta_z (z^3\C)}{z^3\C}V_1
     +h(z)-1\Big)\delbar B_1
\\
\partial_{\bar{z}}Q_p&=0 \qquad (p=0,1,2,\ldots)~,
\\
\partial_{\bar{z}}R_2&=-R_1\partial_{\bar{z}}B_1~.
\end{split}
\end{equation}
\end{lemma}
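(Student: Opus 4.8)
The plan is to read off every identity in \eqref{eq:delbar} by applying $\delbar$ to the defining expressions \eqref{eq:definitions}, using only three inputs: that $\theta_z=z\del$ commutes with $\delbar$; that $\mathcal{T}$ and $\C$ are holomorphic, so $\delbar\mathcal{T}=\delbar\C=0$; and the single master identity $\delbar B_1=-z\G$, which is immediate from $B_1=\theta_z e^{-K}/e^{-K}=-\theta_z K$ together with $\G=\del\delbar K$. Since $A_1=\theta_z\log\G$, one also has $\theta_z(z\G)=z\G(1+A_1)$, hence $\theta_z(\delbar B_1)=(1+A_1)\delbar B_1$; this is the only place the first factor $A_1$ enters.

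For the two $B$-relations I would differentiate the logarithmic-derivative recursion $\theta_z B_p=B_{p+1}-B_pB_1$ from \eqref{eq:LogDer}. Commuting $\theta_z$ past $\delbar$ gives
\begin{equation}\nonumber
\delbar B_{p+1}=\theta_z(\delbar B_p)+B_1\,\delbar B_p+B_p\,\delbar B_1.
\end{equation}
For $p=1$, inserting $\theta_z(\delbar B_1)=(1+A_1)\delbar B_1$ yields $\delbar B_2=(1+A_1+2B_1)\delbar B_1=V_1\,\delbar B_1$. For $p=2$, feeding in $\delbar B_2=V_1\delbar B_1$ and expanding $\theta_z V_1$ via $\theta_z A_1=A_2-A_1^2$ and $\theta_z B_1=B_2-B_1^2$ produces the first form $\delbar B_3=(A_2+2A_1+3B_1+3B_2+3A_1B_1+1)\delbar B_1$. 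To reach the second form I would substitute the expression for $A_2$ obtained above from the special geometry relation \eqref{eq:special}, and use $\frac{\theta_z(z^3\C)}{z^3\C}=2+\frac{\theta_z(z\C)}{z\C}$ together with $V_2=B_2-B_1V_1$; the two descriptions then agree after collecting terms.

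The remaining two identities are direct. For $\delbar Q_p=0$: since $Q_p=z^{1/2}(\theta_z)^{p+1}\mathcal{T}$ and $\theta_z$ commutes with $\delbar$ while $\delbar\mathcal{T}=0$, we get $\delbar Q_p=z^{1/2}(\theta_z)^{p+1}\delbar\mathcal{T}=0$. For $R_2$, the only $\bar z$-dependent factors are $e^K$ and the antiholomorphic period $\overline{\mathcal T}$, so that $\delbar$ of their product regroups, via $\overline{D}_{\bar{z}}=\delbar+\delbar K$, as
\begin{equation}\nonumber
\delbar R_2=z^{7/2}\C\,\delbar\!\big(e^K\overline{\mathcal T}\big)
=z^{7/2}e^K\C\,(\delbar+\delbar K)\overline{\mathcal T}
=z^{7/2}e^K\C\,\overline{D}_{\bar{z}}\overline{\mathcal T};
\end{equation}
comparing with $R_1=z^{5/2}\frac{e^K\C}{\G}\overline{D}_{\bar{z}}\overline{\mathcal T}$ and inserting $\delbar B_1=-z\G$ gives $-R_1\,\delbar B_1=z^{7/2}e^K\C\,\overline{D}_{\bar{z}}\overline{\mathcal T}=\delbar R_2$. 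I expect the only genuinely delicate step to be the reconciliation of the two forms of $\delbar B_3$: the manipulation is elementary but bookkeeping-heavy, and it is the sole place where the special geometry relation (through $A_2$) is invoked rather than just holomorphicity and the master identity $\delbar B_1=-z\G$.
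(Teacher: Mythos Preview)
Your argument is correct and matches the paper's approach: the paper simply cites \cite{YY} for the $B_2$ and $B_3$ identities (which you derive explicitly from the recursion $\theta_z B_p=B_{p+1}-B_pB_1$ together with the special geometry relation), notes that $\delbar Q_p=0$ is trivial by holomorphicity of $\mathcal{T}$, and computes $\delbar R_2$ exactly as you do via the master identity $\G=-\delbar B_1/z$. One minor slip: from the recursion $\theta_z Q_p=\tfrac12 Q_p+Q_{p+1}$ the correct indexing is $Q_p=z^{1/2}(\theta_z)^{p}\mathcal{T}$ (with $Q_0=z^{1/2}\mathcal{T}$), not $(\theta_z)^{p+1}$, though this does not affect your conclusion.
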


\begin{proof}
The first and the second equations were obtained 
from (\ref{eq:special}) in \cite{YY}.
The third is trivial since $Q_p$'s do not depend on $\bar{z}$.
The calculation of  $\delbar R_2$ is as follows.
$$
\partial_{\bar{z}}R_2= z^{\alpha+1}C_{zzz}(\partial_{\bar{z}}\bar{T}+
\partial_{\bar{z}}K\cdot \bar{T})=z G_{z\bar{z}}R_1
=-R_1\partial_{\bar{z}}B_1
$$
where we have used the identity 
$\G = \partial_z\partial_{\bar{z}}K(z,\bar{z})
              = -\partial_{\bar{z}}B_1/z$.
\end{proof}

If one assumes that
$\partial_{\bar{z}}A_1$, $\partial_{\bar{z}}B_1$, 
$\partial_{\bar{z}}R_1$ are independent,
the Walcher's extended holomorphic equation 
\eqref{eq:EHAE}
is rewritten as follows.
\begin{lemma}\label{prop:HAE1}
The equation 
\eqref{eq:EHAE} is equivalent to 
the system of PDE's:
\begin{align}
\label{hae1}
&
\Big[-R_1\frac{\partial}{\partial R_2}
-2\frac{\partial}{\partial A_1}
+
\frac{\partial}{\partial B_1}+
V_1 \frac{\partial}{\partial B_2}
\\\nonumber
&+
\Big(-V_2+\frac{\theta_z (z^3\C)}{z^3\C}V_1+h(z)-1\Big)
\frac{\partial}{\partial B_3}
\Big]P^{(g,h)}=0~,
\\
&
\label{hae2}
\frac{\partial P^{(g,h)}}{\partial A_1}=
-\frac{1}{2}\Big(
\sum_{\begin{subarray}{c}
g_1+g_2=g,\\h_1+h_2=h
\end{subarray}}
P_1^{(g_1,h_1)}P_1^{(g_2,h_2)}+P_2^{(g-1,h)}
\Big)
+(B_1Q_0-Q_1)P_1^{(g,h-1)}~,
\\
&
\label{hae3}
\frac{\partial P^{(g,h)}}{\partial R_1}=-P_1^{(g,h-1)}
~.
\end{align}
Here the summation in \eqref{hae2} runs over
$(g_1,h_1),(g_2,h_2)$ such that $(g_i,h_i)\neq (0,0),(0,1)$.

\end{lemma}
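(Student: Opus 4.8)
The plan is to compute $\delbar P^{(g,h)}$ in two ways and to match the two expressions coefficientwise in the three quantities $\delbar A_1$, $\delbar B_1$, $\delbar R_1$, which are assumed independent. First, since $P^{(g,h)}$ is a polynomial in the generators with coefficients in $\mathbb{C}(z)$ (Lemma~\ref{prop:FI}, Definition~\ref{def:Pghn}) and $\delbar z=0$, the chain rule together with the relations \eqref{eq:delbar} gives
\[
\delbar P^{(g,h)}=\frac{\partial P^{(g,h)}}{\partial A_1}\,\delbar A_1
+\big(\mathcal{B}P^{(g,h)}\big)\,\delbar B_1
+\frac{\partial P^{(g,h)}}{\partial R_1}\,\delbar R_1,
\]
where $\mathcal{B}=\partial_{B_1}+V_1\partial_{B_2}+\big(-V_2+\tfrac{\theta_z(z^3\C)}{z^3\C}V_1+h(z)-1\big)\partial_{B_3}-R_1\partial_{R_2}$ is exactly the operator of \eqref{hae1} \emph{with its $-2\,\partial_{A_1}$ term removed}: here $\delbar Q_p=0$ kills the $Q_p$, while $\delbar B_2$, $\delbar B_3$ and $\delbar R_2$ all feed into the $\delbar B_1$ slot.

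Second, I would differentiate the definition \eqref{eq-def:Pghn}. Since its normalizing factor is holomorphic, $\delbar P^{(g,h)}$ is that factor times $\delbar\F^{(g,h)}$, into which I substitute \eqref{eq:EHAE}. The heart of the matter is to express the two nonholomorphic factors through $\delbar A_1,\delbar B_1,\delbar R_1$. Using $A_1=z\Gam$ and $\G=-\delbar B_1/z$, the special geometry relation \eqref{eq:special} reads $z\C\,\Ct=-(\delbar A_1+2\delbar B_1)$, so $\Ct=-(\delbar A_1+2\delbar B_1)/(z\C)$. For the disk factor I combine \eqref{eq:hadisc} with the explicit form $\triangle_{zz}=z^{-5/2}[Q_2-V_1Q_1-V_2Q_0-R_1]$ from \eqref{eq:induction}; since $\delbar Q_p=0$, $\delbar V_1=\delbar A_1+2\delbar B_1$ and $\delbar V_2=-B_1(\delbar A_1+2\delbar B_1)$, one gets
\[
\triangle_{\bar z}^z=-\frac{\delbar\triangle_{zz}}{\C}
=-\frac{z^{-5/2}}{\C}\Big[(B_1Q_0-Q_1)(\delbar A_1+2\delbar B_1)-\delbar R_1\Big].
\]
Substituting these into \eqref{eq:EHAE} and converting every $\F^{(\cdot,\cdot)}_\cdot$ back to $P^{(\cdot,\cdot)}_\cdot$, the powers of $z^3\C$ and of $z$ combine with the normalizing prefactors of Definition~\ref{def:Pghn} and cancel, leaving the right-hand side in the shape $X\,(\delbar A_1+2\delbar B_1)+Y\,\delbar R_1$ for polynomials $X,Y$ in the generators.

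Comparing the two expressions under the independence assumption then produces the system. Matching the $\delbar R_1$-coefficients gives \eqref{hae3}, and matching the $\delbar A_1$-coefficients gives $\partial P^{(g,h)}/\partial A_1=X$, which is \eqref{hae2} (the $\Ct$-part of $X$ supplies the $-\tfrac12(\sum P_1^{(g_1,h_1)}P_1^{(g_2,h_2)}+P_2^{(g-1,h)})$ piece, the disk part the $(B_1Q_0-Q_1)P_1^{(g,h-1)}$ piece, with the summation constraint $(g_i,h_i)\neq(0,0),(0,1)$ inherited from \eqref{eq:EHAE}). The decisive structural point is that on the right-hand side $\delbar A_1$ and $\delbar B_1$ enter \emph{only} through the single combination $\delbar A_1+2\delbar B_1$, so its $\delbar B_1$-coefficient is automatically $2X$. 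Matching the $\delbar B_1$-coefficients thus yields $\mathcal{B}P^{(g,h)}=2X=2\,\partial P^{(g,h)}/\partial A_1$, and subtracting twice \eqref{hae2} turns this into the homogeneous equation \eqref{hae1}. The converse is immediate: given \eqref{hae1}--\eqref{hae3}, all three coefficients in the chain-rule expansion of the first paragraph are fixed ($\partial_{A_1}P$ and $\partial_{R_1}P$ directly, and $\mathcal{B}P=2\partial_{A_1}P$ from \eqref{hae1}), so reassembling them reproduces the right-hand side of \eqref{eq:EHAE}.

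I expect the main obstacle to be the second computation's bookkeeping: tracking the powers of $z$, of $\C=5/((1-5^5z)z^3)$ and of $(1-5^5z)$ so that every prefactor reduces and each coefficient lands in $I$ exactly as stated, and carrying the constraint $(g_i,h_i)\neq(0,0),(0,1)$ faithfully through the passage from $\F$'s to $P$'s. The one genuinely conceptual subtlety is the degeneracy noted above — that $\delbar A_1$ and $\delbar B_1$ occur only in the combination $\delbar A_1+2\delbar B_1$ — which is precisely why the $\delbar B_1$-comparison does not produce a fourth independent constraint but instead collapses, together with \eqref{hae2}, into the single relation \eqref{hae1}.
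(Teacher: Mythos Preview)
Your proposal is correct and follows essentially the same route as the paper's proof: both compute $\delbar P^{(g,h)}$ via the chain rule (using \eqref{eq:delbar} to funnel $\delbar B_2,\delbar B_3,\delbar R_2$ into the $\delbar B_1$ slot), rewrite the right-hand side of \eqref{eq:EHAE} by expressing $\Ct$ and $\triangle_{\bar z}^z$ as linear combinations of $\delbar A_1,\delbar B_1,\delbar R_1$, and then compare coefficients under the stated independence assumption. Your explicit remark that the right-hand side depends on $\delbar A_1,\delbar B_1$ only through the combination $\delbar A_1+2\delbar B_1$---so that the $\delbar B_1$-comparison, combined with \eqref{hae2}, collapses to \eqref{hae1}---makes transparent a point the paper leaves implicit.
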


\begin{proof}

By \eqref{eq:EHAE}, we have 
\begin{equation}\nonumber
\begin{split}
\partial_{\bar{z}}P^{(g,h)}
 &=
 \frac{1}{2}\partial_{\bar{z}}(z C_{zzz}S^{zz})\cdot
   \Big( \sum_{\begin{subarray}{c}
                g_1+g_2=g,\\h_1+h_2=h
                               \end{subarray}}
      P_1^{(g_1,h_1)}P_1^{(g_2,h_2)}+P_2^{(g-1,h)}
   \Big)
\\
    &-\partial_{\bar{z}}(z^{\frac{5}{2}}
       \C\triangle^z)\cdot P_1^{(g,h-1)}~.
\end{split}
\end{equation}
Note that, by \eqref{eq:propagators}\eqref{eq:delbar}, 
\begin{equation}\nonumber
\begin{split}
 &
   \partial_{\bar{z}}(z C_{zzz}S^{zz})=
  -(\partial_{\bar{z}}A_1+2\partial_{\bar{z}}B_1),
  \\
 &\partial_{\bar{z}}(z^{\frac{5}{2}}C_{zzz}\triangle^z)=
 -(\partial_{\bar{z}}A_1+2\partial_{\bar{z}}B_1)(-Q_1+B_1Q_0)
 +\partial_{\bar{z}}R_1~. 
\end{split}
\end{equation}

On the other hand, 
by \eqref{eq:delbar}, 
$\delbar$ in the LHS is
as follows :
\begin{equation}\nonumber
\begin{split}
\partial_{\bar{z}} &=
\partial_{\bar{z}}R_1 \frac{\partial}{\partial R_1}
+
\partial_{\bar{z}}A_1 \frac{\partial}{\partial A_1}
+\partial_{\bar{z}}B_1 \Big[
-R_1\frac{\partial}{\partial R_2}
+
\frac{\partial}{\partial B_1}+
V_1\frac{\partial}{\partial B_2}
\\& +
\Big(-V_2+\frac{\theta_z (z^3\C)}{z^3\C}V_1
     +h(z)-1\Big) \frac{\partial}{\partial B_3}
\Big].
\end{split}
\end{equation}
Inserting these and comparing 
the coefficients of 
$\partial_{\bar{z}}A_1,
\partial_{\bar{z}}B_1,
\partial_{\bar{z}}R_1
$,
one obtains Lemma \ref{prop:HAE1}.

\end{proof}

To write the equations in a more useful form,
we change the generators. 
We define
\begin{equation}\label{eq:variable-change}
\begin{split}
u &= B_1,\qquad 
v_1=V_1 +\frac{3}{5},\qquad
v_2=V_2+\frac{2}{25},
\\
v_3&=B_3-B_1\Big(-V_2+\frac{\theta_z (z^3\C)}{z^3\C}V_1+h(z)-1\Big)
 +s(z),\\
m_1&=\frac{2}{25}Q_0+\frac{3}{5}Q_1+Q_2-R_1,\\
m_2&=Q_0\Big(s(z)-\frac{2}{25}\frac{\theta_z (z^3\C)}{z^3\C}\Big)
+Q_1\Big(\frac{23}{25}-h(z)\Big)-Q_2\frac{\theta_z(z^3\C)}{z^3\C}
\\&+Q_3 -R_2-B_1R_1,
\end{split}
\end{equation}
where
\begin{equation}
s(z)=\frac{12}{25}-\frac{1}{5}h(z)
+\frac{3}{25}\frac{\theta_z (z^3\C)}{z^3\C}.
\end{equation}

Define the ring
$$
J:=\mathbb{C}(z)[u,v_1,v_2,v_3,
Q_0,Q_1,Q_2,Q_3,m_1,m_2].
$$
It is isomorphic to $I$ since
\eqref{eq:variable-change} is invertible.
Notice that $\theta_z:J\to J$ increases the 
degree in $u$ at most by 1.

Now we regard $P^{(g,h)}\in J$.
Then \eqref{hae1} implies $P^{(g,h)}$ is independent of $u$.
In turn, $P^{(g,h)}_n\in J$ has degree at most $n$ in $u$.
Following \cite[(3-4.c)]{HoKo},
let us  define $u$-independent polynomials
$Y_0,Y_1,W_0,W_1,W_2\in J$ by
\begin{equation}
\begin{split}
&
Y_0+u~Y_1=P_1^{(g,h-1)},
\\
&
W_0+u W_1+u^2W_2 =(\text{the RHS of \eqref{hae2}}).
\end{split}
\end{equation}
Then applying the change of generators 
\eqref{eq:variable-change} 
to the equations 
\eqref{hae1}\eqref{hae2}\eqref{hae3},
we obtain

\begin{theorem}\label{prop:EHAE2}
The equation \eqref{eq:EHAE} is equivalent to the following
system of PDE's for
$P^{(g,h)}\in J$ : 
\begin{equation}\label{eq:EHAE2}
\begin{split}
&\frac{\partial}{\partial u}P^{(g,h)}=0,
\\
&
\frac{\partial}{\partial m_1}P^{(g,h)}=Y_0,
\quad
\frac{\partial}{\partial m_2}P^{(g,h)}=Y_1,
\\
&\frac{\partial}{\partial v_1}P^{(g,h)}=W_0,
\quad
\frac{\partial}{\partial v_2}P^{(g,h)}=-W_1
 +\frac{\theta_z (z^3\C)}{z^3\C} W_2,
\quad
\frac{\partial}{\partial v_3}P^{(g,h)}=-W_2.
\end{split}
\end{equation}

\end{theorem}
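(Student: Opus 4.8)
The plan is to start from the equivalent system \eqref{hae1}, \eqref{hae2}, \eqref{hae3} produced by Lemma \ref{prop:HAE1} and transport it through the invertible change of generators \eqref{eq:variable-change} by the chain rule. Since \eqref{eq:variable-change} expresses each new generator $w\in\{u,v_1,v_2,v_3,Q_0,\dots,Q_3,m_1,m_2\}$ as a polynomial in the old ones $x\in\{A_1,B_1,B_2,B_3,Q_0,\dots,Q_3,R_1,R_2\}$, every old partial derivative becomes $\frac{\partial}{\partial x}=\sum_{w}\frac{\partial w}{\partial x}\frac{\partial}{\partial w}$. Thus the entire proof reduces to computing the Jacobian entries $\frac{\partial w}{\partial x}$ for the operators occurring in \eqref{hae1}, \eqref{hae2}, \eqref{hae3}, namely the operator $L$ on the left of \eqref{hae1} and the derivatives $\frac{\partial}{\partial R_1}$ and $\frac{\partial}{\partial A_1}$, and then reading off the result. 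Throughout I abbreviate $\kappa=\frac{\theta_z(z^3\C)}{z^3\C}$, which together with $h(z)$ is a function of $z$ alone.

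The decisive first step is to show that the operator $L$ appearing in \eqref{hae1} is exactly $\frac{\partial}{\partial u}$ in the new coordinates. I would verify directly that $L$ annihilates each of $v_1,v_2,v_3,Q_0,\dots,Q_3,m_1,m_2$ while $Lu=LB_1=1$. For example $Lv_1=-2\cdot 1+1\cdot 2=0$; $Lv_2=0$ by the same token; $Lm_1=0$ and $Lm_2=0$ because $L$ has no $\partial/\partial Q_p$ or $\partial/\partial R_1$ component and the $-R_2$ and $-B_1R_1$ terms of $m_2$ contribute $R_1-R_1=0$; and the cancellation for $v_3$ uses $LB_3=-V_2+\kappa V_1+h-1$ together with $LV_1=LV_2=0$. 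This is precisely the property for which \eqref{eq:variable-change} was designed, and it turns \eqref{hae1} into the first equation $\frac{\partial}{\partial u}P^{(g,h)}=0$; in particular $P^{(g,h)}$ is independent of $u$.

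Next I would transport \eqref{hae3} and \eqref{hae2}. Since $R_1$ enters the new generators only through $m_1$ (coefficient $-1$) and $m_2$ (coefficient $-B_1$), the chain rule gives $\frac{\partial}{\partial R_1}=-\frac{\partial}{\partial m_1}-u\frac{\partial}{\partial m_2}$, so \eqref{hae3} becomes $\frac{\partial P^{(g,h)}}{\partial m_1}+u\frac{\partial P^{(g,h)}}{\partial m_2}=Y_0+uY_1$. Likewise $A_1$ enters only through $v_1,v_2,v_3$, with $\frac{\partial v_2}{\partial A_1}=-B_1$ and $\frac{\partial v_3}{\partial A_1}=-B_1^2-\kappa B_1$, so $\frac{\partial}{\partial A_1}=\frac{\partial}{\partial v_1}-u\frac{\partial}{\partial v_2}-(u^2+\kappa u)\frac{\partial}{\partial v_3}$ and \eqref{hae2} becomes
\[
\frac{\partial P^{(g,h)}}{\partial v_1}-u\frac{\partial P^{(g,h)}}{\partial v_2}-(u^2+\kappa u)\frac{\partial P^{(g,h)}}{\partial v_3}=W_0+uW_1+u^2W_2.
\]
The final step is to split these two identities into the six equations of \eqref{eq:EHAE2} by comparing coefficients of powers of $u$. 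This is legitimate precisely because the first equation makes $P^{(g,h)}$ independent of $u$, so each $\frac{\partial P^{(g,h)}}{\partial m_i}$ and $\frac{\partial P^{(g,h)}}{\partial v_j}$ is $u$-independent, while $Y_0,Y_1,W_0,W_1,W_2$ and $\kappa$ are $u$-independent by construction; matching the coefficients of $u^0,u^1,u^2$ yields $\frac{\partial P^{(g,h)}}{\partial m_1}=Y_0$, $\frac{\partial P^{(g,h)}}{\partial m_2}=Y_1$, $\frac{\partial P^{(g,h)}}{\partial v_1}=W_0$, $\frac{\partial P^{(g,h)}}{\partial v_3}=-W_2$, and then $\frac{\partial P^{(g,h)}}{\partial v_2}=-W_1+\kappa W_2$.

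The main obstacle is not conceptual but is the accurate bookkeeping of the Jacobian entries for $v_2$, $v_3$ and $m_2$, on which the coefficients $-B_1$ and $-(u^2+\kappa u)$ and hence the whole $u$-separation depend. The one genuinely delicate logical point is the coefficient comparison in $u$: it may be invoked only after the $u$-independence of $P^{(g,h)}$ has been established, and it relies on the $u$-degree bounds recorded earlier—degree at most $1$ for $P_1^{(g,h-1)}=Y_0+uY_1$ and at most $2$ for the right-hand side of \eqref{hae2}—which guarantee that the expansions in $u$ terminate exactly at the orders written.
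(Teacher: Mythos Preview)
Your proposal is correct and follows precisely the approach the paper indicates: it applies the invertible change of generators \eqref{eq:variable-change} to the system \eqref{hae1}--\eqref{hae3} of Lemma~\ref{prop:HAE1}, identifies the operator $L$ with $\partial/\partial u$, and then separates the transported equations by powers of $u$ using the $u$-independence of $P^{(g,h)}$ together with the degree bounds on $P^{(g,h)}_n$. The paper merely states that the theorem is obtained by ``applying the change of generators \eqref{eq:variable-change} to the equations \eqref{hae1}\eqref{hae2}\eqref{hae3}''; your write-up supplies exactly the Jacobian computations and the coefficient-matching justification that this sentence leaves implicit.
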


Let us comment on the constant of integration.   
Decompose $P^{(g,h)}$ as
$$
P^{(g,h)}=\hat{P}^{(g,h)}+P^{(g,h)}|_{v_1,v_2,v_3,m_1,m_2=0}
$$
where $\hat{P}^{(g,h)}$ consists of terms 
of degree $\geq 1$ with respect to at least one of
$v_1,v_2,v_3,m_1,m_2$.
The equations (\ref{eq:EHAE2}) 
can determine $\hat{P}^{(g,h)}$, but not the second term. 
The latter is a priori a polynomial in $Q_0,Q_1,Q_2,Q_3$ with $\mathbb{C}(z)$ coefficients.
However, the choice of the new generators \eqref{eq:variable-change}
is ``good'' (cf. \cite[(3-4.d)]{HoKo}) so that
we have the following 
\begin{proposition}
$$
P^{(g,h)}|_{v_1,v_2,v_3,m_1,m_2=0} = (z^3\C)^{g+h-1}z^{\frac{h}{2}}f^{(g,h)}.
$$
\end{proposition}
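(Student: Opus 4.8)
The plan is to turn the statement into a single vanishing assertion about the Feynman sum \eqref{eq:FD}. By the ansatz \eqref{eq:hol-amb} one has $z^{h/2}f^{(g,h)}\in\mathbb{C}(z)$, so $(z^3\C)^{g+h-1}z^{h/2}f^{(g,h)}$ is a rational function of $z$ and hence a degree-zero element of $J$ in all of $v_1,v_2,v_3,m_1,m_2,Q_0,\dots,Q_3$; it is therefore fixed by the substitution $v_1=\dots=m_2=0$. Writing $P^{(g,h)}=(z^3\C)^{g+h-1}z^{h/2}\bigl(\F^{(g,h)}_{\rm FD}+f^{(g,h)}\bigr)$ and dividing by the invertible factor $(z^3\C)^{g+h-1}z^{h/2}$, the Proposition becomes equivalent to $\F^{(g,h)}_{\rm FD}\big|_{v_1=v_2=v_3=m_1=m_2=0}=0$. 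Moreover $P^{(g,h)}$, and with it $\F^{(g,h)}_{\rm FD}$, is independent of $u$ by the first equation of Theorem \ref{prop:EHAE2}, so I may evaluate this expression at $u=0$ as well: it suffices to show that the $\mathbb{C}(z)$-algebra homomorphism $\epsilon\colon J\to\mathbb{C}(z)[Q_0,\dots,Q_3]$ sending $u,v_1,v_2,v_3,m_1,m_2$ to $0$ annihilates $\F^{(g,h)}_{\rm FD}$.

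I would prove this by induction on $(g,h)$ using the Feynman expansion \eqref{eq:FD}. By the Remark following Definition \ref{def:graph-set}, every $G\in\Bbb{G}(g,h)$ carries at least one edge, so each $F_G$ contains at least one of the factors $-2S,-S^{z},-S^{zz},\triangle,\triangle^{z}$, while by the inductive hypothesis every vertex factor $\widetilde C^{(g_v,h_v)}_{n_v:m_v}$ reduces under $\epsilon$ to a quantity built from the lower ambiguities $f^{(g_v,h_v)}$ and their covariant derivatives. The three ``non-holomorphic'' edge factors are then controlled directly from \eqref{eq:propagators}, \eqref{eq:terminators} rewritten through the change of variables \eqref{eq:variable-change}: one finds $z\C\,S^{zz}=-v_1$ and $-z^{5/2}\C\,\triangle^{z}=m_1-v_1Q_1-v_2Q_0$, both of which lie in the ideal $(v_1,v_2,m_1)$ and are killed by $\epsilon$, and $z^2\C\,S^{z}=v_2+u\,(v_1+\tfrac{12}{5})$, which is killed precisely because $u$ is sent to $0$ as well. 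Hence any diagram carrying an $S^{zz}$-, $S^{z}$- or $\triangle^{z}$-edge drops out of $\epsilon(\F^{(g,h)}_{\rm FD})$, and only diagrams whose inner/outer edges are all of types $E_0^{\rm in}$ and $E_0^{\rm out}$ can survive.

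The hard part will be exactly these two remaining factors, the inner propagator $S$ (type $E_0^{\rm in}$) and the terminator $\triangle=D_z\triangle^{z}$ (type $E_0^{\rm out}$): unlike the others they are built with the covariant derivative $D_z=\tfrac1z(\theta_z+mA_1+nB_1)$, so that the $\theta_z$-action \eqref{eq:LogDer}, together with the special geometry relation \eqref{eq:special} (which fixes $A_2$ and forces $B_4=0$) and \eqref{eq:hadisc}, is needed to evaluate them at $u=v_1=\dots=m_2=0$. These are the very identities that guarantee $\delbar\F^{(g,h)}_{\rm FD}=\text{RHS of }\eqref{eq:EHAE}$, and establishing that they make the contributions of the $S$- and $\triangle$-edges to the constant term cancel is the substance of the ``goodness'' of the generators in \cite[\S 3.4]{HoKo}. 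I would carry this out by grouping the surviving diagrams so that the $D_z$ in $S$ and in $\triangle=D_z\triangle^z$ is redistributed onto the adjacent vertex factors $\widetilde C^{(g_v,h_v)}_{n_v:m_v}$, and then invoking the inductive hypothesis together with the relations above to check that each group sums to zero. The open-string generators $Q_p,R_1,R_2$ enter $\F^{(g,h)}_{\rm FD}$ only through the already-controlled factors $S^{z},S^{zz},\triangle^{z}$ and through the inductive vertex data, so that once the closed-string cancellation of the $S$-edges is in hand the same redistribution argument extends verbatim to the extended equation \eqref{eq:EHAE}. This cancellation step is the only genuinely laborious point of the argument and is where I would expect the main obstacle to lie.
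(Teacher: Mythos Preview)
Your reduction to showing $\mathcal{F}^{(g,h)}_{\rm FD}\big|_{v_1=\cdots=m_2=0}=0$ is correct, and your treatment of the edge factors $S^{zz}$ and $\triangle^z$ is fine. The gap is your handling of $S$ and $\triangle$: you declare these ``the hard part'' and outline a vague induction/redistribution argument that you never carry out, admitting it is ``the only genuinely laborious point.'' But no such argument is needed, and none is given in the paper.

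The entire content of the ``goodness'' of \eqref{eq:variable-change} is that the definitions of $v_3$ (which absorbs $B_3$) and $m_2$ (which absorbs $R_2$ and $Q_3$) are chosen precisely so that $S$ and $\triangle$ land in the ideal $(v_1,v_2,v_3,m_1,m_2)$, just like $S^{zz}$, $S^z$, and $\triangle^z$. The paper simply expands all five edge factors in the new generators and finds
\[
S=\frac{1}{z^3\C}\Big[-\tfrac12 u^2v_1-\Big(u+\tfrac{5^5z}{2(1-5^5z)}\Big)v_2+\tfrac12 v_3\Big],\qquad
\triangle=\frac{1}{z^{7/2}\C}\Big[um_1-m_2-\cdots+Q_0v_3\Big],
\]
every monomial of which carries at least one of $v_1,v_2,v_3,m_1,m_2$ (and the same holds for $S^z=(uv_1+v_2)/z^2\C$; your extra $\tfrac{12}{5}u$ comes from using the formula for $S^z$ in \eqref{eq:propagators}, which appears to contain a misprint). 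Since every $G\in\Bbb{G}(g,h)$ has at least one edge by condition (iv), every $F_G$ contains at least one such factor, and the vanishing is immediate. Your detour through setting $u=0$ is harmless but unnecessary, and your proposed ``redistribution of $D_z$'' scheme is both unneeded and, as stated, not a proof. You should replace that paragraph by the explicit computation of $S$ and $\triangle$ in the variables $u,v_1,v_2,v_3,m_1,m_2$; that computation \emph{is} the proof.
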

\begin{proof}
We have 
\begin{equation}\nonumber
\begin{split}
S^{zz}&=-\frac{v_1}{z \C},\quad
S^z=\frac{uv_1+v_2}{z^2 \C},
\\
S&=\frac{1}{z^3 C}\Big[
-\frac{1}{2}u^2v_1-\Big(u+\frac{5^5z}{2(1-5^5z)}\Big)v_2
+\frac{v_3}{2}\Big],
\\
\triangle^z&=\frac{1}{z^{\frac{5}{2}}\C}
(-m_1+Q_1v_1+Q_0v_2),
\\
\triangle&=\frac{1}{z^{\frac{7}{2}}\C}
\Big[
um_1-m_2-uQ_0v_1-v_2\Big(
uQ_0+\frac{5^5z}{1-5^5z}Q_0+Q_1
\Big)
+Q_0v_3
\Big]~.
\end{split}
\end{equation}
Notice that every monomial term in
the propagators $S^{zz},S^z,S$
and the terminators $\triangle^z,\triangle$
contains at least one of 
$v_1,v_2,v_3,m_1,m_2$.
Therefore 
the Feynman diagram part $\F^{(g,h)}_{FD}$ 
of $\F^{(g,h)}$ has degree at least one
with respect to one of $v_1,v_2,v_3,m_1,m_2$ by \eqref{eq:FG}\eqref{eq:FD}.
This implies that the first term in the RHS of
$$
P^{(g,h)}=(z^3 \C)^{g+h-1}z^{\frac{h}{2}}\F^{(g,h)}_{FD} 
        + (z^3 \C)^{g+h-1}z^{\frac{h}{2}}f^{(g,h)}
$$
vanishes as $v_1,v_2,v_3,m_1,m_2$ go to zero.
This proves the proposition.
\end{proof}

\section{Fixing holomorphic ambiguity and $n^{(g,h)}_d$}
Let  $\omega_0(z)$,$\omega_1(z),\omega_2(z),\omega_3(z)$ be
the following solutions to 
the Picard--Fuchs equation $\mathcal{D}\omega=0$ about $z=0$.
\begin{equation}\nonumber
\omega_i(z)=\partial_{\rho}^i \Big(
\sum_{n\geq 0}\frac{(5\rho+1)_{5n}}
{{(\rho+1)_n}^5}z^{n+\rho}\Big)\Bigg|_{\rho=0}.
\end{equation}
Let $t=\omega_1(z)/\omega_0(z)$ be the mirror map
and consider the inverse $z=z(q)$ where $q=e^t$.
Explicitly, these are
\begin{equation}\nonumber
\begin{split}
  \omega_0(z)&=1+120z+113400z^2+\cdots,
\\
  \omega_1(z)&=\omega_0(z)\log z +770z+810225z^2+\cdots,
\\
  t&=770z+717825z^2+\frac{3225308000}{3}z^3+\cdots,
\\ 
  z&=q-770q^2+171525q^3+\cdots ~.
\end{split}
\end{equation}

Let
\begin{equation}
F_A^{(g,h)}= \lim_{\bar{z}\to 0} \F^{(g,h)} \omega_0(z)^{2g+h-2},
\end{equation}
for  $(g,h)$ satisfying $2g+h-2>0$ \footnote{
For $(g,h)=(0,0),(1,0),(0,1),(0,2)$, one should consider 
\begin{equation}\nonumber
\partial_t ^n F_A^{(g,h)}
=\Big(\frac{dz}{dt}\Big)^n \lim_{\bar{z}\to 0}
 \F_n^{(g,h)}\omega_0^{2g+h-2}
\end{equation}
where $n=3,1,2,1$, respectively.
}%
.
The limit $\bar{z}\to 0$ in the RHS means 
$$
 \G\to \frac{dt}{dz},\quad 
 e^{K} \to \omega_0(z),\quad
 \triangle_{zz} \to D_z D_z \mathcal{T}.
$$
Define $n^{(g,h)}_d$ for $h>0$ 
\footnote{
For $h=0$, the BPS number $n^g_d$ is defined by \cite{GV}
\begin{equation*}
\begin{split}
\sum_{g=0}^{\infty}{g_s}^{2g-2}F_A^{(g,0)}
&= \sum_{g=0}^{\infty}
\sum_{d>0}\sum_{k>0}
n_d^{g}\frac{1}{k}\Big(2\sin\frac{kg_s}{2}\Big)^{2g-2}q^{kd}
+\text{ polynomial in $\log q$}.
\end{split}
\end{equation*}
}\label{ft:GoVa}
by the formula \cite{OV} \cite{LMV} \cite[(3.22)]{Wa2}:
\begin{equation}\label{eq:multiple-cover-formula}
\begin{split}
&\text{the terms in positive powers in $q$ of }
\sum_{g=0}^{\infty}{g_s}^{2g+h-2}F_A^{(g,h)}
\\
&= \sum_{g=0}^{\infty}
\sum_{d}\sum_{k}
n_d^{(g,h)}\frac{1}{k}\Big(2\sin\frac{kg_s}{2}\Big)^{2g+h-2}q^{\frac{kd}{2}}.
\end{split}
\end{equation}
Here the summation of $k$ is over positive odd integers
and
that of $d$ is over positive
even (resp. odd) integers when $h$ is even (resp. odd).

\begin{remark}
It is expected that
$F_A^{(g,h)}$ is the $A$-model 
topological string amplitude of genus $g$ with
$h$ boundaries for the real quintic 3-fold 
$(X,L)$, 
and that $n_{d}^{(g,h)}$ be the BPS invariants
in the class $d\in H_2(X,L;Z)$.
See \cite{Givental,Zinger} for $(g,h)=(0,0),(1,0)$
and \cite{Wa1,PaSoWa} for $(g,h)=(0,1)$.
\end{remark}

In order to fix the holomorphic ambiguity,
we put the following assumptions. 
\begin{enumerate}

\item[(i)] \label{cond2}
If $h$ is even, 
the $q$-constant term in $F_A^{(g,h)}$ vanishes
except for  $(g,h)=(0,2)$.

\item[(ii)] \label{cond1}
$n_d^{(g,h)}=0$ for $d\leq d_0$
where $d_0$ is the smallest number 
necessary to completely determine unknown parameters
in $f^{(g,h)}$.
For example, $d_0=3$ for $(g,h)=(0,3),(1,1)$,
$d_0=6$ for $(g,h)=(1,2),(0,4)$ and $d_0=9$ for 
$(g,h)=(1,3),(0,5)$.
\end{enumerate}
The numbers $n_{d}^{(g,h)}$ obtained under these assumptions
are listed in 
Tables \ref{tab:BPSnumbers1} and \ref{tab:BPSnumbers2}.

\begin{remark}
The boundary conditions proposed 
in \cite{Wa2} are
the condition (i) and the condition that
\begin{equation}\label{cond3}
n^{(g,h)}_d=0 \text{ if }  n^{2g+h-1}_{d}=0.
\end{equation}
These do not give enough equations to 
 fix the unknown parameters
of $f^{(g,h)}$, unless
$(g,h)=(0,1),(0,2),(0,3),(1,1)$.
{}For this reason we assumed 
(ii) instead of
\eqref{cond3}. 

\end{remark}

\begin{remark}
For the cases listed in Tables \ref{tab:BPSnumbers1} and 
\ref{tab:BPSnumbers2},
$n_{d}^{(g,h)}$ turn out to be integers.
However, for $(g,h)=(0,7),(1,5),(2,1)$,
the holomorphic ambiguities determined by our assumptions
do not give integral $n_{d}^{(g,h)}$'s.  
\end{remark}

\begin{remark}
As a final remark,
let us comment on the expansion about 
the conifold point $z=\frac{1}{5^5}$.
By expanding $\F^{(0,4)}$ about $z=\frac{1}{5^5}$,
we see that there is no gap condition such as the one 
found in \cite[(1.2)]{HKQ}.
On the other hand, 
if one imposes the gap condition to $\mathcal{F}^{(0,4)}$
instead of $n_6^{(0,4)}=0$,
then the integrality of $n_d^{(0,4)}$'s  does not hold.

\end{remark}
\begin{table}[t]

\begin{equation}\nonumber
\begin{array}{|r|r|}\hline
d&n_d^{(0,4)}\\\hline
2& 0\\
4& 0\\
6& 0\\
8& -307669500\\
10&-1290543544800\\
12&-4192442370526500\\
14&-11974312128284645400\\
16&-31709386561589633978460\\
18& -79870219101822591783739800\\
20& -194146223749422074623095454800\\
\hline
\end{array}
~~
\begin{array}{|r|r|}\hline
d&n_d^{(0,5)}\\\hline
1&0\\
3&0\\
5&0\\
7&0\\
9&0\\
11&-101052180000\\
13 &-6448499064000\\
15&  2809704427965432000\\
17&19034205058652662269000\\
19&   85987169904148441092385200\\\hline
\end{array}
\end{equation}

\begin{equation}\nonumber
\begin{array}{|r|r|}\hline
d&n_d^{(0,6)}\\\hline
2& 0\\
4& 0\\
6& 0\\
8& 0\\
10& 0\\
12& 0\\
14& 10969992383850000\\
16& 88807052603386080000\\
18&  453871851092663617206000\\
20&  1856308715086126538509560000\\\hline
\end{array}
\end{equation}
\caption{$n_{d}^{(g,h)}$ for $(g,h)=(0,4),(0,5),(0,6)$}
\label{tab:BPSnumbers1}
\end{table}
\begin{table}
\begin{equation}\nonumber
\begin{array}{|r|r|}\hline
d& n_d^{(1,1)}\\\hline
1& 0\\
3& 0\\
5&-222535\\
7& -472460880\\
9& -970639017980\\
11& -1925950714205525\\
13& -3771152449472734885\\
15& -7341083828377813532445\\
17& -14254813486499789264497980\\
19&  -27655486644196368361422400900\\
\hline
\end{array}~~
\begin{array}{|r|r|}\hline
d&n_d^{(1,2)}\\\hline
2& 0\\
4& 0\\
6& 0\\
8&-1798092240\\
10&-3910898328975\\
12&-3254492224834500\\
14&  11749281716111889000\\
16& 75858033724596666836250\\
18&  284100639663878543462155290\\
20&  881568399267730913608111758000\\
\hline
\end{array}
\end{equation}
\begin{equation}\nonumber
\begin{array}{|r|r|}\hline
d&n_d^{(1,3)}\\\hline
1&0\\
3&0\\
5&0\\
7&0\\
9&0\\
11& 59476704611850\\
13&376498723243912410\\
15&  1597793312432171312570\\
17& 5622302692504776557418000\\
19&  17697465511801448466779111250\\
\hline
\end{array}~~
\begin{array}{|r|r|}\hline
d&n_d^{(1,4)}\\\hline
2& 0\\
4&0\\
6&0\\
8&0\\
10& 0\\
12& 0\\
14& -510835096894879500\\
16&  -4625213168889849497100\\
18& -26075494174267321098602160\\
20&  -116382815077174964736448167150\\
\hline
\end{array}
\end{equation}
\caption{$n_{d}^{(g,h)}$ for $(g,h)=(1,1),(1,2),(1,3),(1,4)$}
\label{tab:BPSnumbers2}
\end{table}
\appendix
\section{Examples of Feynman diagrams}
Feynman diagrams for $\mathcal{F}^{(0,3)}_{\rm{FD}}$ and $\mathcal{F}^{(1,1)}_{\rm{FD}}$
have been given in eqs. (2.109) and (2.108) of \cite{Wa2} respectively
($\# \Bbb{G}(0,3)= \# \Bbb{G}(1,1)=4$). 
For $(g,h)=(0,4)$, we have $\# \Bbb{G}(0,4)=19$. See Fig. \ref{fig:FD04}. 
It is clear that the number of Feynman diagrams grows rapidly as $g$ and $h$ increase.  
For example, one can check that $\# \Bbb{G}(0,5)=83$, 
$\# \Bbb{G}(1,2)=29$, $\# \Bbb{G}(2,1)=97$.  
\begin{figure}[h]
\psfrag{0}{$-$}
\psfrag{1}{$=$}
\psfrag{2}{$+$}
\psfrag{3}{$+\; \frac{1}{2}$}
\psfrag{4}{$+$}
\psfrag{5}{$+\; \frac{1}{2}$}
\psfrag{6}{$+\;\frac{1}{6}$}
\psfrag{7}{$+\;\frac{1}{2}$}
\psfrag{8}{$+\;\frac{1}{24}$}
\psfrag{9}{$+\;\frac{1}{6}$}
\psfrag{10}{$+\;\frac{1}{8}$}
\psfrag{11}{$+\;\frac{1}{2}$}
\psfrag{12}{$+\;\frac{1}{2}$}
\psfrag{13}{$+\;\frac{1}{2}$}
\psfrag{14}{$+\;\frac{1}{2}$}
\psfrag{15}{$+$}
\psfrag{16}{$+$}
\psfrag{17}{$+\;\frac{1}{2}$}
\psfrag{18}{$+$}
\psfrag{19}{$+\;\frac{1}{2}$}
\begin{center}
\includegraphics{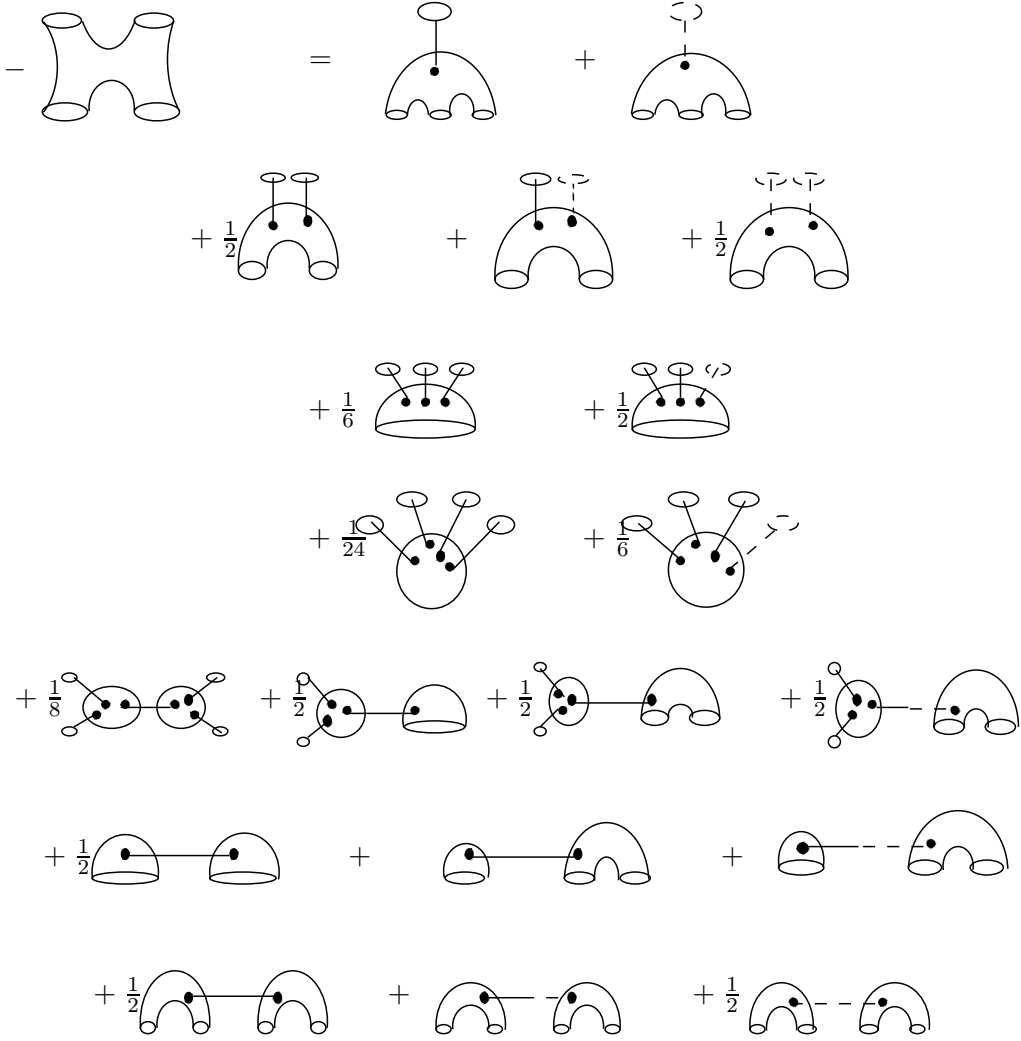}
\caption{The elements $G$ in $\Bbb{G}(0,4)$ and the orders of $A_G$. The vertices 
are expressed as bordered Riemann surfaces to visualize the labeling. }
\label{fig:FD04}
\end{center}
\end{figure}

\section{$f^{(0,4)}$ and $P^{(0,4)}$}
\begin{equation}\nonumber
f^{(0,4)}=
\frac{ 2 - 20125\,z + 70618750\,z^2 - 86493078125\,z^3  }
  {10000\,{\left( 1 - 3125\,z \right) }^2}.
\end{equation}

\vspace*{.5cm}
{\tiny
{\allowdisplaybreaks
\begin{align*}
&P^{(0,4)}=
\frac{z^2\,( -2 + 20125\,z - 70618750\,z^2 + 86493078125\,z^3 ) }
   {80\,{( -1 + 3125\,z ) }^5} - 
  \frac{z\,( 2 - 9500\,z + 16015625\,z^2 ) \,{m_1}^2}
   {20\,{( -1 + 3125\,z ) }^3} 
\\ 
&+ 
  \frac{( -9 + 12500\,z ) \,{m_1}^4}{120\,( -1 + 3125\,z ) } + 
  \frac{75\,z^2\,( -1 + 3145\,z ) \,m_2}{4\,{( -1 + 3125\,z ) }^3} + 
  \frac{{m_1}^3\,m_2}{6} + \frac{5\,z\,{m_2}^2}{4\,( -1 + 3125\,z ) } \\
&+ 
  m_1\,( \frac{375\,z^3\,( -3 + 3125\,z ) }
      {2\,{( -1 + 3125\,z ) }^4} + 
     \frac{375\,z^2\,m_2}{2\,{( -1 + 3125\,z ) }^2} )  - 
  \frac{{Q_1}^4\,{v_1}^5}{8} + ( \frac{( -3 + 25000\,z ) \,{Q_0}^4}
      {40\,( -1 + 3125\,z ) } - \frac{{Q_0}^3\,Q_1}{6} ) \,{v_2}^4 
\\&
+ 
  {v_1}^4\,( \frac{m_1\,{Q_1}^3}{2} + 
     \frac{( -9 + 12500\,z ) \,{Q_1}^4}{120\,( -1 + 3125\,z ) } - 
     \frac{Q_0\,{Q_1}^3\,v_2}{2} )  + 
  ( \frac{25\,z^2}{8\,{( -1 + 3125\,z ) }^2} - 
     \frac{75\,z^2\,( -1 + 3145\,z ) \,Q_0}{4\,{( -1 + 3125\,z ) }^3} 
\\&
- 
     \frac{375\,z^2\,m_1\,Q_0}{2\,{( -1 + 3125\,z ) }^2} - 
     \frac{{m_1}^3\,Q_0}{6} - \frac{5\,z\,m_2\,Q_0}{2\,( -1 + 3125\,z ) } ) v_3 
+ \frac{5\,z\,{Q_0}^2\,{v_3}^2}{4\,( -1 + 3125\,z ) } 
\\
&
+ 
  {v_2}^2\,( \frac{z\,( -1 + 4750\,z + 119921875\,z^2 ) \,{Q_0}^2}
      {10\,{( -1 + 3125\,z ) }^3}
 + 
     m_1\,( \frac{-5\,z\,Q_0}{2\,( -1 + 3125\,z ) } + 
        \frac{m_2\,{Q_0}^2}{2} )  
\\&- 
     \frac{8000\,z^2\,Q_0\,Q_1}{{( -1 + 3125\,z ) }^2} + 
     \frac{5\,z\,{Q_1}^2}{4\,( -1 + 3125\,z ) } + 
     {m_1}^2\,( \frac{( -9 + 43750\,z ) \,{Q_0}^2}
         {20\,( -1 + 3125\,z ) } - \frac{Q_0\,Q_1}{2} )  - 
     \frac{m_1\,{Q_0}^3\,v_3}{2} )  
\\&
+ 
  {v_2}^3\,( \frac{5\,z\,{Q_0}^2}{4\,( -1 + 3125\,z ) } - 
     \frac{m_2\,{Q_0}^3}{6} + m_1\,( \frac{-( ( -9 + 59375\,z ) \,
             {Q_0}^3 ) }{30\,( -1 + 3125\,z ) } + \frac{{Q_0}^2\,Q_1}{2}
        )  + \frac{{Q_0}^4\,v_3}{6} )  
\\&
+ 
  {v_1}^3\,( \frac{-375\,z^2\,{Q_1}^2}{4\,{( -1 + 3125\,z ) }^2} - 
     \frac{3\,{m_1}^2\,{Q_1}^2}{4} - \frac{( -9 + 12500\,z ) \,m_1\,{Q_1}^3}
      {30\,( -1 + 3125\,z ) } - \frac{m_2\,{Q_1}^3}{6} 
\\&+ 
     ( \frac{3\,m_1\,Q_0\,{Q_1}^2}{2} + \frac{3\,Q_0\,{Q_1}^3}{10} - 
        \frac{{Q_1}^4}{6} ) \,v_2 - \frac{3\,{Q_0}^2\,{Q_1}^2\,{v_2}^2}{4} 
+ 
     \frac{Q_0\,{Q_1}^3\,v_3}{6} )  
\\&+ 
  v_2\,( \frac{81875\,z^3}{8\,{( -1 + 3125\,z ) }^3} - 
     \frac{236625\,z^3\,Q_0}{4\,{( -1 + 3125\,z ) }^3} + 
     {m_1}^2\,( \frac{5\,z}{4\,( -1 + 3125\,z ) } - \frac{m_2\,Q_0}{2} )
\\&
         + {m_1}^3\,( \frac{-3\,Q_0}{10} + \frac{Q_1}{6} )  + 
     \frac{75\,z^2\,( -1 + 3145\,z ) \,Q_1}{4\,{( -1 + 3125\,z ) }^3} + 
     m_1\,( \frac{z\,( -1 + 1625\,z ) \,Q_0}
         {5\,{( -1 + 3125\,z ) }^2} + 
        \frac{375\,z^2\,Q_1}{2\,{( -1 + 3125\,z ) }^2} ) 
\\& + 
     m_2\,( \frac{-8000\,z^2\,Q_0}{{( -1 + 3125\,z ) }^2} + 
        \frac{5\,z\,Q_1}{2\,( -1 + 3125\,z ) } )  + 
     ( \frac{8000\,z^2\,{Q_0}^2}{{( -1 + 3125\,z ) }^2} + 
        \frac{{m_1}^2\,{Q_0}^2}{2} - \frac{5\,z\,Q_0\,Q_1}{2\,( -1 + 3125\,z ) }
        ) \,v_3 )
\\&  + v_1\,
   ( \frac{-140625\,z^4}{8\,{( -1 + 3125\,z ) }^4} - \frac{{m_1}^4}{8} - 
     \frac{375\,z^3\,( -3 + 3125\,z ) \,Q_1}{2\,{( -1 + 3125\,z ) }^4} + 
     \frac{z\,( 2 - 9500\,z + 16015625\,z^2 ) \,m_1\,Q_1}
      {10\,{( -1 + 3125\,z ) }^3} 
\\&- 
     \frac{( -9 + 12500\,z ) \,{m_1}^3\,Q_1}{30\,( -1 + 3125\,z ) }
  - 
     \frac{375\,z^2\,m_2\,Q_1}{2\,{( -1 + 3125\,z ) }^2} + 
     {m_1}^2\,( \frac{-375\,z^2}{4\,{( -1 + 3125\,z ) }^2} - 
        \frac{m_2\,Q_1}{2} ) 
\\& + 
     ( \frac{m_1\,{Q_0}^3}{2} + \frac{( -9 + 59375\,z ) \,{Q_0}^3\,Q_1}
         {30\,( -1 + 3125\,z ) } - \frac{{Q_0}^2\,{Q_1}^2}{2} ) \,{v_2}^3
  - 
     \frac{{Q_0}^4\,{v_2}^4}{8} 
\\& + ( \frac{375\,z^2\,Q_0\,Q_1}
         {2\,{( -1 + 3125\,z ) }^2} + \frac{{m_1}^2\,Q_0\,Q_1}{2} ) \,v_3 + 
     v_2\,( \frac{{m_1}^3\,Q_0}{2} - 
        \frac{z\,( -1 + 1625\,z ) \,Q_0\,Q_1}{5\,{( -1 + 3125\,z ) }^2} - 
        \frac{375\,z^2\,{Q_1}^2}{2\,{( -1 + 3125\,z ) }^2} 
\\&+
        m_1\,( \frac{375\,z^2\,Q_0}{2\,{( -1 + 3125\,z ) }^2} - 
           \frac{5\,z\,Q_1}{2\,( -1 + 3125\,z ) } + m_2\,Q_0\,Q_1 )  + 
        {m_1}^2\,( \frac{9\,Q_0\,Q_1}{10} - \frac{{Q_1}^2}{2} )  - 
        m_1\,{Q_0}^2\,Q_1\,v_3 )  
\\&+ 
     {v_2}^2\,( \frac{-375\,z^2\,{Q_0}^2}{4\,{( -1 + 3125\,z ) }^2} - 
        \frac{3\,{m_1}^2\,{Q_0}^2}{4} + 
        \frac{5\,z\,Q_0\,Q_1}{2\,( -1 + 3125\,z ) } - 
        \frac{m_2\,{Q_0}^2\,Q_1}{2} 
\\&+ 
        m_1\,( \frac{-( ( -9 + 43750\,z ) \,{Q_0}^2\,Q_1 ) }
            {10\,( -1 + 3125\,z ) } + Q_0\,{Q_1}^2 )  + 
        \frac{{Q_0}^3\,Q_1\,v_3}{2} )  )  
\\& + 
  {v_1}^2\,( \frac{{m_1}^3\,Q_1}{2} - 
     \frac{z\,( 2 - 9500\,z + 16015625\,z^2 ) \,{Q_1}^2}
      {20\,{( -1 + 3125\,z ) }^3} 
 + 
     \frac{( -9 + 12500\,z ) \,{m_1}^2\,{Q_1}^2}{20\,( -1 + 3125\,z ) } 
\\& + 
     m_1\,( \frac{375\,z^2\,Q_1}{2\,{( -1 + 3125\,z ) }^2} + 
        \frac{m_2\,{Q_1}^2}{2} )  + 
     ( \frac{3\,m_1\,{Q_0}^2\,Q_1}{2} + 
        \frac{( -9 + 43750\,z ) \,{Q_0}^2\,{Q_1}^2}
         {20\,( -1 + 3125\,z ) } - \frac{Q_0\,{Q_1}^3}{2} ) \,{v_2}^2 
\\&- 
     \frac{{Q_0}^3\,Q_1\,{v_2}^3}{2} 
 - \frac{m_1\,Q_0\,{Q_1}^2\,v_3}{2} + 
     v_2\,( \frac{-375\,z^2\,Q_0\,Q_1}{2\,{( -1 + 3125\,z ) }^2} - 
        \frac{3\,{m_1}^2\,Q_0\,Q_1}{2} + 
        \frac{5\,z\,{Q_1}^2}{4\,( -1 + 3125\,z ) } 
\\&- \frac{m_2\,Q_0\,{Q_1}^2}{2} + 
        m_1\,( \frac{-9\,Q_0\,{Q_1}^2}{10} + \frac{{Q_1}^3}{2} )  + 
        \frac{{Q_0}^2\,{Q_1}^2\,v_3}{2} )  ).
\end{align*}
}
}


\end{document}